\newtheorem{theorem}{Theorem}[section]
\newtheorem{lemma}[theorem]{Lemma}
\newtheorem{q}[theorem]{Question}
\newtheorem{prop}[theorem]{Proposition}
\newtheorem{corollary}[theorem]{Corollary}
\theoremstyle{definition}
\newtheorem{definition}[theorem]{Definition}
\newtheorem{probl}[theorem]{Problem}
\theoremstyle{remark}
\newtheorem{rem}[theorem]{Remark}
\numberwithin{equation}{section}
\newcommand{\NN}{\mathbb{N}}
\newcommand{\RR}{\mathbb{R}}
\def\x{\mathbf{x}}
\def\P{\mathcal{P}}
\def\Q{\mathcal{Q}}
\def\S{\mathcal{S}}
\begin{document}

\title[Infinite dimensional moment problem: open questions and applications.]{Infinite dimensional moment problem: \\ open questions and applications.}

\author[M. Infusino]{Maria Infusino}
\address{Fachbereich Mathematik und Statistik,
Universit\"at Konstanz,\newline \indent
78457 Konstanz, Germany.}
\email{maria.infusino@uni-konstanz.de}
\thanks{The first author was partially supported by the Kongress--und--Vortragsreisenprogramm of DAAD (German Academic Exchange Service) and by the Equal Opportunity Council of the University of Konstanz within the project Konstanz Women in Mathematics}

\author[S. Kuhlmann]{Salma Kuhlmann}
\address{Fachbereich Mathematik und Statistik,
Universit\"at Konstanz,\newline \indent
78457 Konstanz, Germany.}
\email{salma.kuhlmann@uni-konstanz.de}
\thanks{The second author was partially supported by the AFF (Ausschuss f\"ur Forschungsfragen) research funding of University of Konstanz.}

\subjclass[2010]{Primary 44A60}

\dedicatory{This paper is dedicated to Murray Marshall, who posed many of the questions here addressed and was still working on them in the very last days of his life.  We lost a wonderful collaborator and a dear friend. We sorely miss him.}

\keywords{moment problem; infinite dimensional moment problem; lmc algebras; symmetric algebras; nuclear spaces.}

\begin{abstract} 
Infinite dimensional moment problems have a long history in diverse applied areas dealing with the analysis of complex systems but progress is hindered by the lack of a general understanding of the mathematical structure behind them. Therefore, such problems have recently got great attention in real algebraic geometry also because of their deep connection to the finite dimensional case. 
In particular, our most recent collaboration with Murray Marshall and Mehdi Ghasemi about the infinite dimensional moment problem on symmetric algebras of locally convex spaces revealed intriguing questions and relations between real algebraic geometry, functional and harmonic analysis. Motivated by this promising interaction, the principal goal of this paper is to identify the main current challenges in the theory of the infinite dimensional moment problem and to highlight their impact in applied areas. The last advances achieved in this emerging field and briefly reviewed throughout this paper led us to several open questions which we outline here.
\end{abstract}

\maketitle
 \section*{Introduction}\label{Intro}
The classical full $n-$dimensional $K-$moment problem asks whether a linear
functional $L : \mathbb{R}[x_1,\ldots, x_n] \rightarrow \mathbb{R}$ can be represented as integral w.r.t.\! some non-negative Radon measure supported on a fixed closed subsed $K$ of ${\mathbb{R}}^n$ (here $n\in\mathbb{N}$). When the starting functional is defined only for polynomials up to a certain degree, the moment problems is referred to as truncated. Already at an early stage the moment problem was generalized to the case of infinitely many variables, allowing the support of the measure to be possibly an infinite dimensional space (see e.g.~\cite{BK}, \cite{BS}, \cite{Bor-Yng75}, \cite{Heg75}, \cite{Lenard}, \cite{Pow71-74}, \cite{Schmu90}). Moment problems posed in infinite dimensional settings are often addressed as \emph{infinite dimensional moment problems}.

Infinite dimensional moment problems naturally arise in diverse applied areas dealing with the analysis of complex systems, i.e. many-body systems such as liquid composed of molecules, a molecule composed of atoms, a galaxy composed of stars. Since such a system consists of a huge number of identical components, the essence of its investigation is to evaluate selected characteristics (usually correlation functions), which encode the most relevant properties of the system. It is therefore fundamental to understand whether a finite number of given candidate correlation functions actually represent the correlation functions of some random distribution. This problem is well-known as \emph{realizability problem} and has a long history in statistical physics and in theoretical chemistry. The mathematical formulation of the realizability problem was given in \cite{Perc64} in order to better understand the closure problem in the theory of classical fluids and since then there has been an extensive production on realizability problems in statistical mechanics. Recent approaches are based on the interpretation of these problems as infinite dimensional truncated moment problems (see e.g.\!~\cite{IK}, \cite{IKR}, \cite{KLS07}, \cite{KLS09}). This new point of view was exploited not only in the analysis of interacting particle systems but also in the study of the realizability problem for random closed sets (e.g.~\cite{MolLach15}) and in material sciences in relation to random packing and heterogeneous materials modeling (see e.g.\! \cite{CraToSt03},  \cite{To02}, \cite{ToSt06}). The quantum mechanical variant of the realizability problem, known as  \emph{representability problem} for reduced density matrices, revealed to be a really useful approach to the computation of the ground state energies of molecules (e.g.~\cite{Co63}, \cite{GaPerc64}, \cite{K67}) actually yielding rigorous lower bounds. Recently, advances in computing power and in algorithms for positive semi-definite programming have led to an accuracy superior to that of the traditional electronic structure method, increasing the interest in the representability problem. Indeed, the quality of the approximation of the ground state energy depends on the availability of explicit representability criteria. 

The fundamental mathematical challenge which is behind each of these applications is therefore to derive treatable necessary and sufficient solvability conditions for the moment problem posed in a setting general enough to include the specific one of the considered application. This kind of structural investigation has been recently started and brought to new theoretical developments on the infinite dimensional full moment problem, which are essentially based on the interplay between the finite and the infinite dimensional case (see e.g. \cite{AJK}, \cite{GIKM}, \cite{GKM-IMP}, \cite{GKM}, \cite{IKR}, \cite{KLS07}, \cite{KLS09}). In particular, the results in \cite{GIKM} revealed intriguing relations between real algebraic geometry, functional and harmonic analysis, leading us to several open questions which we are going to present in this paper.

In Section 1 we introduce the general set up of the moment problem for linear functionals on any unital commutative $\RR-$algebra and briefly present some of the key-results obtained for this abstract formulation of the moment problem. In Section 2 we consider this problem for unital commutative $\RR-$algebras which are endowed with a locally multiplicatively convex topology. In particular, we review the results in \cite{GKM} for this class of moment problems and show some consequences of these results which brought us to interesting open questions. In Section 3 we focus on the special case of linear functionals on the symmetric algebra of a real locally convex space, since this setting is general enough to include the classical finite dimensional moment problem and several of the infinite dimensional moment problems appearing in the applications mentioned above. More precisely, we will review the results of \cite{GIKM} and compare them with some previous results \cite[Vol. II, Chapter~5, Section~2]{BK}, \cite{BS}, \cite[Section 4]{Bor-Yng75}, \cite[Section 3]{I}, \cite{IKR} about the moment problem on the symmetric algebra of a real locally convex (lc) space which is also nuclear. This comparison will actually be the source of several questions which to the best of our knowledge are still unsolved.

\section{Preliminaries}
Given a unital commutative $\mathbf{\RR-}$algebra $A$, we denote by $X(A)$ the \emph{character space} of $A$, i.e. the set $Hom(A; \RR)$ of all $\RR-$algebra homomorphisms $\alpha: A\to\RR$. For any $a \in A$, we define the \emph{Gelfand transform} $\hat{a} : X(A) \rightarrow \mathbb{R}$ as $\hat{a}(\alpha) := \alpha(a)$, $\forall\alpha\in X(A)$. We endow the character space $X(A)$ with the weakest topology $\tau_{X(A)}$ s.t. all Gelfand transforms are continuous, i.e. $\hat{a}$ is continuous for all $a \in A$. Note that $X(A)$ can be also seen as a subset of $\RR^A$ via the embedding
$\pi: X(A)\to \RR^A$ defined by $\pi(\alpha):=\left(\alpha(a)\right)_{a\in A}=\left(\hat{a}(\alpha)\right)_{a\in A}$,
and so it carries the natural topology induced by $\pi$ when $\RR^A$ is endowed with the product topology. It can be showed that this topology on $X(A)$ coincides with $\tau_{X(A)}$ (see \cite[Section 5.7]{M}).

\begin{probl}[The $KMP$ for unital commutative $\RR-$algebras]\label{GenKMP}\ \\
Given a closed subset $K\subseteq X(A)$ and a linear functional $L: A\to\RR$, does there exist a non-negative Radon measure $\mu$ on $X(A)$ such that 
$$L(a)=\int_{X(A)}\hat{a}(\alpha) \mu(d\alpha), \forall a\in A\quad\text{and}\quad supp(\mu)\subseteq K?$$
\end{probl}
If the answer is positive then we say that $\mu$ is a \emph{$K-$representing measure} for $L$ and that $L$ is represented by $\mu$ on $K$.
Recall that a Radon measure on a Hausdorff topological space $X$ is a non-negative measure on the $\sigma-$algebra of Borel sets of $X$ that is locally finite and inner regular.

Note that for $A=\RR[\x]=\RR[x_1,\ldots, x_d]$ Problem \ref{GenKMP} reduces to the classical $d-$dimensional $K-$moment problem. Indeed, using that the identity is the unique $\RR-$algebra homomorphism from $\RR$ to $\RR$, it can be proved that any $\RR$-algebra homomorphism from $\RR[\x]$ to $\RR$ corresponds to a point evaluation $p\mapsto p(\alpha)$ with $\alpha\in\RR^d$ and so $X(\RR[\x])$ is topologically isomorphic to $\RR^d$ (see e.g. \cite[Proposition 5.4.5]{M}).

As for the finite dimensional KMP, a necessary and sufficient condition for the existence of a solution is provided by a generalized version of the so-called Riesz-Haviland theorem. 
\begin{theorem}[Generalized Riesz-Haviland Theorem]\label{GRHThm}
Let $K\subseteq X(A)$ be closed and $L: A\to\RR$ linear. Suppose that there exists $p\in A$ s.t. $\hat{p}\geq 0$ on $K$ and for each $i\in\NN$ the set $\{\alpha\in K: \hat{p}(\alpha)\leq i\}$ is compact. Then: \\ $L$ has a $K-$representing measure if and only if $L(\operatorname{Pos}(K))\subseteq[0, +\infty)$, where
\begin{equation}\label{PosK}
\operatorname{Pos}(K):= \{ a \in A : \hat{a} \ge 0 \text{ on } K\}.
\end{equation}
\end{theorem}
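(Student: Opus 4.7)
The forward implication is immediate from the definition: if $\mu$ is a $K$-representing measure for $L$, then for any $a\in\operatorname{Pos}(K)$ we have $\hat a\ge 0$ on $\operatorname{supp}(\mu)\subseteq K$, whence $L(a)=\int_K \hat a\,d\mu\geq 0$.

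For the converse, the plan is to realize $L$ as integration against a Radon measure by extending an associated functional on continuous functions. I would first pass to the Gelfand picture: set $V:=\{\hat a|_K:a\in A\}\subseteq C(K,\RR)$, a linear subspace containing the constant $\widehat{1}=1$. The hypothesis $L(\operatorname{Pos}(K))\subseteq[0,\infty)$ forces $\tilde L(\hat a|_K):=L(a)$ to be well-defined on $V$ (its kernel lies in $\operatorname{Pos}(K)\cap(-\operatorname{Pos}(K))$) and to satisfy $\tilde L(f)\geq 0$ whenever $f\in V$ is nonnegative on $K$. Next, the coercivity of $\hat p$ makes $K$ locally compact Hausdorff: for $\alpha\in K$ and $n>\hat p(\alpha)$, the open neighborhood $\{\hat p<n\}\cap K$ has closure contained in the compact set $\{\hat p\leq n\}\cap K$.

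The core step is a Hahn--Banach extension. On the subspace $V+C_c(K)\subseteq C(K)$ I would define
\begin{equation*}
\rho(h):=\inf\bigl\{\tilde L(g):g\in V,\ g\geq h\text{ on }K\bigr\}.
\end{equation*}
Because $V$ contains all real constants, every $h\in V+C_c(K)$ is dominated above by some element of $V$ (giving finiteness) and $\rho$ is sublinear; positivity of $\tilde L$ on the nonnegative cone in $V$ yields $\rho|_V=\tilde L$, while $h\leq 0$ forces $\rho(h)\leq 0$. Hahn--Banach then produces a linear extension $\Lambda:V+C_c(K)\to\RR$ of $\tilde L$ with $\Lambda\leq\rho$, hence positive. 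Restricting $\Lambda$ to $C_c(K)$ and invoking the classical Riesz representation theorem on the locally compact Hausdorff space $K$ yields a Radon measure $\mu$ on $K$ with $\Lambda(f)=\int_K f\,d\mu$ for all $f\in C_c(K)$.

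The final step is to upgrade this identity to $L(a)=\int_K \hat a\,d\mu$ for every $a\in A$. For this I would use cutoffs $\chi_n\in C_c([0,\infty))$ with $\chi_n\equiv 1$ on $[0,n]$ and $0\leq\chi_n\leq 1$; then $\chi_n(\hat p)\,\hat a\in C_c(K)$ converges pointwise to $\hat a|_K$. The main obstacle, and the technical heart of the proof, is precisely this limiting argument: one must simultaneously establish that $\hat a\in L^1(\mu)$ (so that dominated convergence gives $\int_K \chi_n(\hat p)\hat a\,d\mu\to\int_K\hat a\,d\mu$) and that $\Lambda\bigl(\chi_n(\hat p)\hat a\bigr)\to\Lambda(\hat a)=L(a)$. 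Both rely delicately on the coercivity of $\hat p$, which concentrates the mass of $\mu$ in the compact sublevel sets $\{\hat p\leq n\}$ and, via the dominating sublinear $\rho$, forces the $\Lambda$-contribution of the tail $(1-\chi_n(\hat p))\hat a$ to vanish as $n\to\infty$.
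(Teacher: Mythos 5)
The paper itself gives no proof of Theorem~\ref{GRHThm}; it refers to \cite[Section~3.2]{M}, and your outline follows exactly the strategy of that reference: pass to the Gelfand picture, use the coercive element $p$ to make $K$ locally compact Hausdorff, extend the positive functional from $V=\{\hat a|_K : a\in A\}$ to $V+C_c(K)$ by the M.~Riesz/Hahn--Banach argument with your sublinear $\rho$, and apply the Riesz representation theorem to $\Lambda|_{C_c(K)}$. Everything up to and including the construction of $\mu$ is correct and complete (the well-definedness of $\tilde L$, the finiteness and sublinearity of $\rho$, the positivity of $\Lambda$, and the local compactness of $K$ are all justified as you state).

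The gap is that the step you yourself call ``the technical heart'' --- upgrading $\Lambda(f)=\int f\,d\mu$ on $C_c(K)$ to $L(a)=\int\hat a\,d\mu$ on $A$ --- is described but not proved, and it does not follow from coercivity by a soft limiting argument; it needs two specific ingredients. First, a reduction to elements with $\hat a\ge 0$ on $K$: for general $a$ use $2a=(a+1)^2-a^2-1$, so that it suffices to treat $a$ with $\hat a\ge0$ (and $|\hat a|\le\tfrac12(\widehat{a^2}+1)$ then gives $\hat a\in L^1(\mu)$ once the nonnegative case is done). Second, for $\hat a\ge0$ and a cutoff $\chi\in C_c(K)$ with $0\le\chi\le1$ and $\chi\equiv1$ on $K_n:=\{\alpha\in K:\hat p(\alpha)\le n\}$, one has the pointwise inequality $(1-\chi)\hat a\le\tfrac1n\hat a\hat p=\tfrac1n\widehat{ap}$ on $K$ (both sides vanish on $K_n$, and $\hat p/n>1\ge 1-\chi$ off $K_n$); since $\tfrac1n\widehat{ap}-(1-\chi)\hat a$ is a nonnegative element of $V+C_c(K)$, positivity of $\Lambda$ gives $\Lambda\bigl((1-\chi)\hat a\bigr)\le L(ap)/n$, with $L(ap)$ a \emph{fixed} finite number because $ap\in A$ and $\widehat{ap}\ge0$ on $K$. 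This is the precise mechanism by which the tail contribution vanishes; without it your claim that $\Lambda(\chi_n(\hat p)\hat a)\to L(a)$ is unsupported. In the other direction, $\chi\hat a\le\hat a$ gives $\int_{K_n}\hat a\,d\mu\le\Lambda(\chi\hat a)\le L(a)$, and monotone convergence over the exhaustion $K=\bigcup_n K_n$ yields $\hat a\in L^1(\mu)$ and $\int\hat a\,d\mu\le L(a)$. Combining the two inequalities closes the argument; with these items supplied your proof coincides with the one in \cite[Section~3.2]{M}.
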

The characterization given in Theorem \ref{GRHThm} actually holds under slightly more general assumptions (c.f.\!\! \cite[Section~3.2]{M}), but the ones stated here are general enough for our purposes in this article. 

Theorem \ref{GRHThm} reduces Problem \ref{GenKMP} to the problem of characterizing $\operatorname{Pos}(K)$. However, the latter is seldom finitely generated (see \cite[Proposition 6.1]{Scheid99}) and in general there is no practical decision procedure to check whether an element of the algebra belongs to $\operatorname{Pos}(K)$. A common approach in finite and infinite dimensions to attack this problem is to try to approximate elements in $\operatorname{Pos}(K)$ with elements of $A$ whose Gelfand transform is ``more evidently'' non-negative, e.g. sum of even powers of elements of $A$ (see discussion about Question \ref{pos-certificate}). In this spirit it is somehow natural to consider $2d-$power modules of the algebra $A$ for $d\in\NN$. 

\begin{definition}
Let $d\in\NN$. A \emph{$2d$-power module} of $A$ is a subset $M$ of $A$ satisfying $1 \in M, \ M+M \subseteq M \text{ and } a^{2d}M \subseteq M \text{ for each } a \in A.$ 
\end{definition}
In the case $d=1$, $2d$-power modules are referred to as \emph{quadratic modules}. We denote by $\sum A^{2d}$ the set of all finite sums $\sum a_i^{2d}$, $a_i \in A$. $\sum A^{2d}$ is the smallest $2d$-power module of $A$. 
\begin{definition}
Let $\{p_j\}_{j\in J}$ be an arbitrary subset of elements in $A$. The $2d$-power module of $A$ generated by $\{p_j\}_{j\in J}$ is defined as $M = \sum A^{2d}+\sum_{j\in J}\left(p_j\sum A^{2d}\right).$ 
\end{definition}
Note that the index set $J$ can have also infinite cardinality. In the classical finite dimensional moment problem only finitely generated quadratic modules have been considered. The results presented in this paper also hold for infinitely generated $2d-$power modules both in the classical and in the general setting.

A linear functional $L : A \rightarrow \mathbb{R}$ is said to be
{\it positive} if $L(\sum A^{2d}) \subseteq [0,\infty)$ and {\it
$M-$positive} for some $2d-$power module $M$ of $A$, if $L(M)\subseteq
[0,\infty)$. For any subset $M$ of $A$, we set 
$$X_M:= \{ \alpha \in X(A) :
\hat{a}(\alpha)\ge 0 ,\ \forall a\in M\},$$  
which is closed in $\left(X(A), \tau_{X(A)}\right)$.
If $M= \sum A^{2d}$ then
$X_M = X(A)$. If $M$ is the $2d$-power module of $A$ generated by
$\{p_j\}_{j\in J}$ then $X_M\!=\!\{\alpha\!\in\! X(A):\hat{p_j}(\alpha)\ge 0,\ \forall j\!\in\! J\}.$

Given a $2d-$power module $M$ of $A$ and a linear functional $L:A\to\RR$, it is clear that if there exists a $X_M-$representing measure $\mu$ for $L$, then $L$ is $M-$positive since $M\subseteq \operatorname{Pos}(X_M)$. This condition was proved to be also sufficient when the $2d-$power module is Archimedean, i.e.\! 
for each $a \in A$ there exists an integer $N$ such that $N \pm a
\in M$, by mean of the Jacobi Positivstellensatz (see \cite{Jac}). Note that this result was already known for Archimedean quadratic modules in $\RR[\x]$ thanks to the Putinar Positivstellensatz \cite{Put93}.
\begin{theorem}\label{Archm-thm}
Let $M$ be an Archimedean $2d$-power module of $A$ and $L : A \rightarrow \mathbb{R}$ a linear functional. $L$ has a $X_M-$representing measure iff $L(M)\subseteq[0,+\infty)$.
\end{theorem}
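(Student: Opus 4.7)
The plan is to deduce both directions from theorems already at hand: the ``only if'' from direct integration, and the ``if'' from the Generalized Riesz--Haviland Theorem (Theorem~\ref{GRHThm}) together with Jacobi's Positivstellensatz.

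For necessity, if $\mu$ is an $X_M$-representing measure for $L$ and $a \in M$, then $M \subseteq \operatorname{Pos}(X_M)$ by definition of $X_M$, so $\hat{a} \geq 0$ on $X_M = \operatorname{supp}(\mu) \cup \text{(null set)}$, and hence $L(a) = \int_{X_M} \hat{a}\, d\mu \geq 0$. This direction is essentially a tautology.

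The substance lies in the converse. First I would exploit the Archimedean hypothesis to show that $X_M$ is compact in $\tau_{X(A)}$: for every $a \in A$ choose $N_a \in \NN$ with $N_a \pm a \in M$; then any $\alpha \in X_M$ satisfies $\hat{a}(\alpha) = \alpha(a) \in [-N_a, N_a]$, so $X_M$ embeds into the compact product $\prod_{a \in A}[-N_a,N_a]$ via $\pi$, and being closed in $X(A)$ it is compact. This lets me apply the Generalized Riesz--Haviland Theorem with the trivial choice $p = 1$, which reduces the task to verifying $L(\operatorname{Pos}(X_M)) \subseteq [0,+\infty)$.

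To establish the latter I would invoke Jacobi's Positivstellensatz: since $M$ is Archimedean, any $b \in A$ with $\hat{b} > 0$ on $X_M$ belongs to $M$. Given $a \in \operatorname{Pos}(X_M)$ and $\varepsilon > 0$, the element $a + \varepsilon\cdot 1$ has Gelfand transform $\hat{a} + \varepsilon > 0$ on $X_M$, so $a + \varepsilon \in M$ and therefore $L(a) + \varepsilon L(1) = L(a + \varepsilon) \geq 0$ by $M$-positivity. Since $1 \in M$ gives $L(1) \geq 0$, letting $\varepsilon \downarrow 0$ yields $L(a) \geq 0$, as required.

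The main conceptual obstacle is really packaged into Jacobi's Positivstellensatz, which is the nontrivial algebraic input that generalizes Putinar's theorem to arbitrary unital commutative $\RR$-algebras and to $2d$-power modules; once this is cited as a black box, together with the compactness of $X_M$ from the Archimedean bound, the proof assembles in a few lines via the approximation $a \rightsquigarrow a + \varepsilon$ and Theorem~\ref{GRHThm}.
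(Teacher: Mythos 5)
Your argument is correct and follows exactly the route the paper itself indicates: the paper's ``proof'' is only a citation to \cite[Corollary 2.6]{GMW} (alternatively \cite[Theorem 5.5]{GK}), but in the preceding discussion it explicitly attributes the result to Jacobi's Positivstellensatz, and your assembly --- compactness of $X_M$ from the Archimedean bounds, the Generalized Riesz--Haviland Theorem with $p=1$, and the $a+\varepsilon\in M$ approximation --- is precisely the standard proof of that cited corollary. The only cosmetic point is that in the compactness step one should note that $X_M$ is closed in $\RR^A$ (since $X(A)$ is cut out by closed algebraic conditions), so that its image under $\pi$ is a closed subset of the compact product $\prod_{a}[-N_a,N_a]$.
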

\begin{proof} See  \cite[Corollary 2.6]{GMW}. The conclusion can be also obtained as a consequence of \cite[Theorem 5.5]{GK}.
\end{proof}

In the case of non-Archimedean $2d-$power module $M$, it revealed to be extremely powerful to consider topologies on $A$ such that
\begin{equation}\label{clos-cond}
\operatorname{Pos}(X_M)\subseteq \overline{M}^{\tau}.
\end{equation}
Indeed, if this condition holds then an application of the
Hahn-Banach theorem together with Theorem \ref{GRHThm} shows that:
$L$ has a $X_M-$representing measure if and only if $L$ is $\tau-$continuous and $M-$positive.
For the case $A=\RR[\x]$, there are several papers dealing with the
moment problem for linear functionals continuous w.r.t. certain weighted norm topologies (see e.g. \cite{BCR}, \cite{GKS}, \cite{GMW}, \cite{L}, \cite{Schmu78}). This approach has been extended to more general settings (see e.g. \cite{BCR-book}, \cite{BM}, \cite{Schmu78}) and recently also to the one of Problem \ref{GenKMP} in \cite{GK}, \cite{GKM}, \cite{GMW}, where the authors analyze integral representations of linear functionals continuous w.r.t. locally multiplicatively convex (lmc) topologies.

\section{The moment problem on lmc topological $\RR$-algebras}
In this section we shortly present some closure results of the type \eqref{clos-cond} and their corresponding solutions for Problem \ref{GenKMP} which have been recently developed for locally multiplicatively convex $\RR-$algebras (see \cite{GK} and \cite{GKM}). In particular we are going to highlight some consequences of these results which led us to interesting open questions.

Let $(A, \cdot)$ be a unital commutative $\RR-$algebra and $\sigma$ be a submultiplicative seminorm on $A$, i.e. $\sigma (a \cdot b)\leq \sigma(a)\sigma(b)$ for all $a,b\in A$. The algebra $A$ together with such a $\sigma$ is called a \emph{submultiplicative seminormed $\RR-$algebra} and is denoted by $(A, \sigma)$. We denote the set of all $\sigma-$continuous $\RR-$algebra
homomorphisms from $A$ to $\RR$ by $sp(\sigma)$, which we refer to as the \emph{Gelfand spectrum} of $(A, \sigma)$, i.e.
$$\mathfrak{sp}(\sigma) := \{ \alpha \in X(A): \alpha \text{ is } \sigma-\text{continuous}\}.$$
We endow $\mathfrak{sp}(\sigma)$ with the subspace topology induced by $\left(X(A), \tau_{X(A)}\right)$.
\begin{lemma}\label{char-GelfSp} \cite[Lemma 3.2, Corollary 3.3]{GKM} 
For any submultiplicative seminormed $\RR-$algebra $(A, \sigma)$ we have that
$$
\mathfrak{sp}(\sigma)=\{ \alpha \in X(A) : |\alpha(a)|\le \sigma(a) \text{ for all } a\in A\}.
$$
and $\mathfrak{sp}(\sigma)$ is compact in $(X(A), \tau_{X(A)})$.
\end{lemma}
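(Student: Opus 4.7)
The plan is to establish both assertions essentially independently, with the first requiring the classical spectral-radius trick for submultiplicative seminorms, and the second reducing to Tychonoff compactness once the first is in hand.

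For the characterization of $\mathfrak{sp}(\sigma)$, the inclusion ``$\supseteq$'' is immediate: if $\alpha \in X(A)$ satisfies $|\alpha(a)| \le \sigma(a)$ for all $a \in A$, then from $|\alpha(a)-\alpha(b)| = |\alpha(a-b)| \le \sigma(a-b)$ we conclude that $\alpha$ is $\sigma$-continuous (in fact $1$-Lipschitz). For the converse inclusion, suppose $\alpha \in \mathfrak{sp}(\sigma)$. By $\sigma$-continuity at $0$, there exists a constant $C > 0$ such that $|\alpha(a)| \le C\sigma(a)$ for all $a \in A$. The key step is then to iterate: since $\alpha$ is multiplicative and $\sigma$ is submultiplicative, for any $n \in \NN$
\begin{equation*}
|\alpha(a)|^n \;=\; |\alpha(a^n)| \;\le\; C\,\sigma(a^n) \;\le\; C\,\sigma(a)^n,
\end{equation*}
so $|\alpha(a)| \le C^{1/n}\sigma(a)$, and letting $n \to \infty$ yields $|\alpha(a)| \le \sigma(a)$. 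This is the spectral-radius-type argument and is the only non-formal step in the proof; the rest is bookkeeping.

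For compactness, the natural approach is to use the embedding $\pi: X(A) \to \RR^A$ discussed earlier in the excerpt. By the first part,
\begin{equation*}
\pi(\mathfrak{sp}(\sigma)) \;\subseteq\; \prod_{a \in A}\bigl[-\sigma(a),\,\sigma(a)\bigr],
\end{equation*}
and the latter product is compact in $\RR^A$ (with the product topology) by Tychonoff's theorem. Since $\tau_{X(A)}$ is precisely the topology induced by $\pi$, it suffices to show that $\pi(\mathfrak{sp}(\sigma))$ is closed in $\RR^A$. But being an $\RR$-algebra homomorphism is a conjunction of closed conditions in the product topology: for each $a,b \in A$ and $\lambda \in \RR$ the equations
\begin{equation*}
x_{a+b} = x_a + x_b, \qquad x_{\lambda a} = \lambda x_a, \qquad x_{ab} = x_a x_b, \qquad x_1 = 1
\end{equation*}
cut out a closed subset of $\RR^A$ (they are polynomial relations in finitely many coordinates). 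Intersecting with the closed slabs $\{|x_a| \le \sigma(a)\}$ for each $a$ gives $\pi(\mathfrak{sp}(\sigma))$ as a closed subset of a compact set, hence compact.

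I do not anticipate a genuine obstacle; the only content is the iteration argument $|\alpha(a)| \le C^{1/n}\sigma(a) \to \sigma(a)$, which is a standard device for passing from a mere continuity bound to the sharp bound $|\alpha(a)| \le \sigma(a)$. Everything else is formal once one accepts the embedding $\pi$ and Tychonoff.
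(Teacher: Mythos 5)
Your proof is correct, and since the paper only cites \cite[Lemma 3.2, Corollary 3.3]{GKM} without reproducing an argument, the relevant comparison is with that reference: your two steps --- the spectral-radius iteration $|\alpha(a)|=|\alpha(a^n)|^{1/n}\le C^{1/n}\sigma(a)\to\sigma(a)$ to sharpen the continuity bound, and the identification of $\pi(\mathfrak{sp}(\sigma))$ as a closed subset of the Tychonoff-compact product $\prod_{a\in A}[-\sigma(a),\sigma(a)]$ --- are exactly the standard route taken there. No gaps.
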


We present here a closure result of the type \eqref{clos-cond} for submultiplicative seminormed $\RR-$algebras which was proved in \cite[Theorem 3.7]{GKM} and which allows to get new necessary and sufficient conditions to solve Problem \ref{GenKMP}.
\begin{theorem}\label{ClosureRes}
Let $(A, \sigma)$ be a submultiplicative seminormed $\RR-$algebra, $d\in\NN$ and $M$ a $2d$-power module of $A$ (not necessarily Archimedean). Then
$$\overline{M}^{\sigma} = \operatorname{Pos}(X_M \cap  \mathfrak{sp}(\sigma)).$$ 
\end{theorem}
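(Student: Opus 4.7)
The plan is to prove both inclusions separately, setting $K := X_M \cap \mathfrak{sp}(\sigma)$. The inclusion $\overline{M}^{\sigma} \subseteq \operatorname{Pos}(K)$ is elementary: each $\alpha \in K$ is by Lemma~\ref{char-GelfSp} a $\sigma$-continuous $\RR$-algebra homomorphism, so every halfspace $\{a \in A : \alpha(a) \geq 0\}$ is $\sigma$-closed; intersecting over $\alpha \in K$ shows $\operatorname{Pos}(K)$ is $\sigma$-closed, and since $M \subseteq \operatorname{Pos}(X_M) \subseteq \operatorname{Pos}(K)$, the claim follows on taking $\sigma$-closures.

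For the reverse inclusion I would argue by contradiction. Observe first that $M$ is a convex cone: for $\lambda \geq 0$ the axiom $a^{2d} M \subseteq M$ applied with $a = \lambda^{1/(2d)} \cdot 1$ gives $\lambda M \subseteq M$, while closure under addition is built into the definition of a $2d$-power module. Hence $\overline{M}^{\sigma}$ is a $\sigma$-closed convex cone, and if $a_0 \in \operatorname{Pos}(K) \setminus \overline{M}^{\sigma}$ the Hahn--Banach separation theorem supplies a $\sigma$-continuous linear functional $L : A \to \RR$ with $L(a_0) < 0$ and $L(M) \subseteq [0,\infty)$. The strategy is to produce a Radon measure $\mu$ on $\mathfrak{sp}(\sigma)$ that represents $L$ and is concentrated on $K$; then $a_0 \in \operatorname{Pos}(K)$ will force $L(a_0) = \int \hat{a}_0 \, d\mu \geq 0$, the desired contradiction.

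To build $\mu$ I pass through the Gelfand transform. By Lemma~\ref{char-GelfSp} the map $\Gamma : A \to C(\mathfrak{sp}(\sigma), \RR)$, $a \mapsto \hat{a}|_{\mathfrak{sp}(\sigma)}$, is a contractive $\RR$-algebra homomorphism (using $|\alpha(a)| \leq \sigma(a)$), and $\Gamma(A)$ separates points and contains the constants, hence is uniformly dense by Stone--Weierstrass. The crucial estimate is
\[ |L(a)| \leq L(1)\, \|\Gamma(a)\|_\infty, \qquad a \in A, \]
which I extract by combining the $\sum A^{2d}$-positivity inequality $|L(a)|^{2d} \leq L(1)^{2d-1} L(a^{2d})$ with $\sigma$-continuity $|L(b)| \leq C \sigma(b)$, submultiplicativity $\sigma(a^k) \leq \sigma(a)^k$, and the spectral-radius formula $\|\Gamma(a)\|_\infty = \lim_{n} \sigma(a^n)^{1/n}$ (iterating the moment inequality and passing to the limit). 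This bound makes $\tilde{L}(\Gamma(a)) := L(a)$ well-defined and continuous on $\Gamma(A)$, and density extends it to a continuous linear functional on $C(\mathfrak{sp}(\sigma), \RR)$; positivity of the extension is verified by writing $f \geq 0$ as $g^{2d}$ with $g = f^{1/(2d)}$ continuous, approximating $g$ by some $\Gamma(a_n)$, and noting $\tilde{L}(f) = \lim L(a_n^{2d}) \geq 0$ because $a_n^{2d} \in \sum A^{2d} \subseteq M$. The Riesz representation theorem then yields a Radon measure $\mu$ on $\mathfrak{sp}(\sigma)$ with $L(a) = \int \hat{a}\, d\mu$ for every $a \in A$.

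It remains to confine $\operatorname{supp}(\mu)$ to $K$. For every $m \in M$ and $s \in A$ one has $ms^{2d} \in M$, hence $\int \hat{m}\, \Gamma(s)^{2d}\, d\mu = L(ms^{2d}) \geq 0$; density of $\Gamma(A)$ promotes this to $\int \hat{m} f\, d\mu \geq 0$ for every $f \geq 0$ in $C(\mathfrak{sp}(\sigma), \RR)$, so $\hat{m} \geq 0$ $\mu$-almost everywhere. The open set $\{\hat{m} < 0\}$ is then $\mu$-null and therefore disjoint from $\operatorname{supp}(\mu)$, and intersecting over $m \in M$ gives $\operatorname{supp}(\mu) \subseteq X_M \cap \mathfrak{sp}(\sigma) = K$, closing the argument. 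The hard part will be the operator-norm bound $|L(a)| \leq L(1)\|\Gamma(a)\|_\infty$, which hinges on the $2d$-power Cauchy--Schwarz inequality $|L(a)|^{2d} \leq L(1)^{2d-1} L(a^{2d})$ for functionals positive on $\sum A^{2d}$; this is the substantive analytic input that joins the algebraic positivity of $L$ carried by $M$ to the topological information carried by $\sigma$.
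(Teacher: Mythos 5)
Your easy inclusion, the Hahn--Banach reduction to producing a representing measure for a $\sigma$-continuous $M$-positive functional $L$, and the final support argument are all sound. The proof stands or falls on the ``crucial estimate'' $|L(a)|\le L(1)\,\|\Gamma(a)\|_\infty$, and there the argument has a genuine gap, in fact two. First, the inequality $|L(a)|^{2d}\le L(1)^{2d-1}L(a^{2d})$ is asserted, not proved. For $d=1$ it is Cauchy--Schwarz (discriminant of $t\mapsto L((a+t)^2)$), but for $d\ge 2$ the analogous discriminant argument fails: nonnegativity of the degree-$2d$ polynomial $t\mapsto L((a+t)^{2d})$ does not force $c_1^{2d}\le c_0^{2d-1}c_{2d}$ (e.g.\ $1+4t+6t^2\ge 0$ has $c_4=0$, $c_1=1$), and the usual AM--GM repair would require polynomials such as $\tfrac{x^{2d}}{2d\lambda^{2d-1}}+\tfrac{(2d-1)\lambda}{2d}\mp x$ to lie in $\sum A^{2d}$, which is not automatic since nonnegative univariate polynomials need not be sums of $2d$-th powers. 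Second, and more fatally: even granting that inequality, your iteration only yields $|L(a)|\le L(1)\lim_n\sigma(a^n)^{1/n}$. For a \emph{real} algebra one only has $\|\Gamma(a)\|_\infty=\sup_{\alpha\in\mathfrak{sp}(\sigma)}|\alpha(a)|\le\lim_n\sigma(a^n)^{1/n}$, and the inequality can be strict because $\mathfrak{sp}(\sigma)$ consists of real characters only (for $A=\mathbb{C}$ viewed as an $\RR$-algebra with the modulus norm, $\mathfrak{sp}(\sigma)=\emptyset$ while $\lim_n\sigma(a^n)^{1/n}=|a|$). So your bound is in the wrong direction to conclude $|L(a)|\le L(1)\|\Gamma(a)\|_\infty$, and without that bound $\tilde L(\Gamma(a)):=L(a)$ need not be well defined: nothing you have shown excludes $a$ with $\Gamma(a)=0$ but $L(a)\ne 0$. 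Showing that a $\sigma$-continuous functional nonnegative on $\sum A^{2d}$ cannot ``see'' the part of the spectrum invisible to real characters is precisely the substantive content of the theorem, so it cannot be absorbed into a spectral-radius formula.

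For comparison, the proof the paper relies on (\cite[Theorem~3.7]{GKM}) avoids Gelfand theory and the Riesz representation theorem altogether: one enlarges $M$ to an \emph{Archimedean} $2d$-power module $N$ with $X_N=X_M\cap\mathfrak{sp}(\sigma)$, applies Jacobi's representation theorem (Theorem~\ref{Archm-thm} is its integral form) to get $\operatorname{Pos}(X_N)\subseteq\overline{N}$, and then shows $N\subseteq\overline{M}^{\sigma}$; the only analytic input is that $1-a^{2d}\in\overline{\sum A^{2d}}^{\sigma}$ whenever $\sigma(a)<1$, obtained from the binomial series for $(1-a^{2d})^{1/(2d)}$, which converges thanks to submultiplicativity of $\sigma$. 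If you want to salvage your route, you would need to first establish a positivity certificate of that Positivstellensatz type; the measure-theoretic superstructure of your argument is fine, but it is fed by an estimate you have not actually obtained.
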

Using Theorem \ref{ClosureRes} and Theorem \ref{GRHThm}
\footnote{
Note that we can apply Theorem  \ref{GRHThm} for $K=X_M \cap  \mathfrak{sp}(\sigma)$. Indeed, since $X_M$ is a closed subset of $\left(X(A), \tau_{X(A)}\right)$, we have by Lemma \ref{char-GelfSp} that $X_M \cap  \mathfrak{sp}(\sigma)$ is compact and so the assumption of Theorem  \ref{GRHThm} is fulfilled for $p=1$.}, one can derive the following result \cite[Corollary 3.8]{GKM} for the moment problem in this setting. 
\begin{prop}\label{coroll-SubAlg}
Let  $(A, \sigma)$ be a submultiplicative seminormed $\RR-$algebra, $d\in\NN$, $M$ a $2d$-power module of $A$ and $L : A \rightarrow \mathbb{R}$ a linear functional. $L$ has a representing measure supported on $X_M \cap  \mathfrak{sp}(\sigma)$ if and only if $L$ is $\sigma-$continuous and $L(M)\subseteq[0,+\infty)$.
\end{prop}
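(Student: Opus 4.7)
The plan is to deduce the proposition by combining Theorem \ref{ClosureRes} with the generalized Riesz--Haviland Theorem \ref{GRHThm}, and to use Lemma \ref{char-GelfSp} to control $\mathfrak{sp}(\sigma)$. Set $K:=X_M\cap \mathfrak{sp}(\sigma)$ throughout; by Lemma \ref{char-GelfSp} and the closedness of $X_M$ in $(X(A),\tau_{X(A)})$, $K$ is compact.

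For the ``only if'' direction, suppose $\mu$ is a Radon measure supported on $K$ representing $L$. Compactness of $K$ makes $\mu(K)$ finite (as a locally finite measure on a compact set). $M$-positivity follows at once from $\hat a\ge 0$ on $X_M\supseteq K$ for every $a\in M$. For $\sigma$-continuity, I would invoke the norm bound from Lemma \ref{char-GelfSp}: since $\mathrm{supp}(\mu)\subseteq\mathfrak{sp}(\sigma)$, we have $|\hat a(\alpha)|=|\alpha(a)|\le\sigma(a)$ on $\mathrm{supp}(\mu)$, hence
$$|L(a)|\le\int_K|\hat a|\,d\mu\le \mu(K)\,\sigma(a)\qquad\forall a\in A.$$

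For the ``if'' direction, assume $L$ is $\sigma$-continuous with $L(M)\subseteq[0,\infty)$. By $\sigma$-continuity, $L$ is non-negative on the $\sigma$-closure $\overline{M}^\sigma$, and Theorem \ref{ClosureRes} identifies this closure with $\operatorname{Pos}(K)$; consequently $L(\operatorname{Pos}(K))\subseteq[0,\infty)$. To apply Theorem \ref{GRHThm} it remains only to verify its compactness hypothesis for $K$, which is trivial here: taking $p=1\in A$, one has $\hat 1\equiv 1\ge 0$ on $K$ and each level set $\{\alpha\in K:\hat 1(\alpha)\le i\}$ equals $K$ or $\emptyset$, both compact. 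Theorem \ref{GRHThm} then delivers the desired $K$-representing measure.

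There is no substantial obstacle: the whole weight of the proof has been absorbed into Theorem \ref{ClosureRes}, and what remains is essentially bookkeeping around the Riesz--Haviland hypothesis (ensured by compactness of $\mathfrak{sp}(\sigma)$) and the elementary estimate $|\alpha(a)|\le \sigma(a)$ on $\mathfrak{sp}(\sigma)$ used to convert a representing measure on $K$ into a $\sigma$-continuity bound on $L$.
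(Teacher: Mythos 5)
Your proof is correct and follows essentially the same route as the paper, which derives the proposition by combining Theorem \ref{ClosureRes} with Theorem \ref{GRHThm}, applied with $p=1$ to the compact set $K=X_M\cap\mathfrak{sp}(\sigma)$ (compactness coming from Lemma \ref{char-GelfSp} and the closedness of $X_M$). The only difference is that you spell out the routine ``only if'' direction, which the paper leaves implicit.
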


Theorem \ref{ClosureRes} also extends to the class of locally multiplicatively convex (lmc) topologies, i.e. topologies induced by families of submultiplicative seminorms. We will call \emph{lmc $\RR-$algebra} a unital commutative $\RR-$algebra $A$ endowed with a lmc topology. 
\begin{theorem}\label{ClosureRes-lmc}
Let  $(A, \tau)$ be a lcm $\RR-$algebra, $d\in\NN$ and $M$ a $2d$-power module of $A$. Then
$$\overline{M}^{\tau} = \operatorname{Pos}(X_M \cap  \mathfrak{sp}(\tau)).$$ 
\end{theorem}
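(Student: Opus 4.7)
My plan is to reduce the statement to the submultiplicative seminormed case already handled by Theorem \ref{ClosureRes}. By definition, $\tau$ is induced by some family $\Sigma$ of submultiplicative seminorms on $A$. Since the pointwise maximum of two submultiplicative seminorms is again submultiplicative, I would first replace $\Sigma$ by the collection of its finite pointwise maxima and thereby assume, without loss of generality, that $\Sigma$ is directed; this does not change the topology $\tau$ but ensures that the sets $U_{\sigma,\varepsilon} := \{b \in A : \sigma(b-a) < \varepsilon\}$, with $\sigma \in \Sigma$ and $\varepsilon > 0$, form a neighborhood basis of $a$ in $(A, \tau)$. Moreover, every $\sigma \in \Sigma$ is by construction $\tau$-continuous, so any $\sigma$-continuous $\RR$-algebra homomorphism is automatically $\tau$-continuous, yielding $\mathfrak{sp}(\sigma) \subseteq \mathfrak{sp}(\tau)$ for each $\sigma \in \Sigma$.

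For the easy inclusion $\overline{M}^{\tau} \subseteq \operatorname{Pos}(X_M \cap \mathfrak{sp}(\tau))$, I would pick $a \in \overline{M}^{\tau}$ and a net $(m_\lambda)$ in $M$ converging to $a$ in $\tau$. For any $\alpha \in X_M \cap \mathfrak{sp}(\tau)$, the $\tau$-continuity of $\alpha$ gives $\alpha(m_\lambda) \to \alpha(a)$; since $\alpha(m_\lambda) \geq 0$ for all $\lambda$ (as $\alpha \in X_M$), the limit is non-negative and $\hat{a}(\alpha) = \alpha(a) \geq 0$.

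For the reverse inclusion, I would take $a \in \operatorname{Pos}(X_M \cap \mathfrak{sp}(\tau))$ and fix an arbitrary basic neighborhood $U_{\sigma, \varepsilon}$ of $a$. From $\mathfrak{sp}(\sigma) \subseteq \mathfrak{sp}(\tau)$ I would deduce $X_M \cap \mathfrak{sp}(\sigma) \subseteq X_M \cap \mathfrak{sp}(\tau)$, whence $a \in \operatorname{Pos}(X_M \cap \mathfrak{sp}(\sigma))$. Applying Theorem \ref{ClosureRes} to the submultiplicative seminormed algebra $(A, \sigma)$ then gives $a \in \overline{M}^{\sigma}$, so some $m \in M$ satisfies $\sigma(m-a) < \varepsilon$, i.e.\ $m \in U_{\sigma, \varepsilon} \cap M$. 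Since the $U_{\sigma,\varepsilon}$ exhaust a basis of $\tau$-neighborhoods of $a$, this proves $a \in \overline{M}^{\tau}$. The only genuinely technical point is the opening reduction to a directed generating family of submultiplicative seminorms: without it one could not produce a \emph{single} seminorm $\sigma$ controlling a prescribed $\tau$-neighborhood, and the invocation of Theorem \ref{ClosureRes} would fail. Once that is in place, Theorem \ref{ClosureRes} carries all the analytic weight and no new estimates are required.
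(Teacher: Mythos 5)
Your proof is correct and follows essentially the route the paper intends: the paper's remark that $\mathfrak{sp}(\tau)=\bigcup_{\rho\in\mathcal{S}}\mathfrak{sp}(\rho)$ for a directed generating family $\mathcal{S}$ is precisely the reduction to Theorem \ref{ClosureRes} that you carry out, since $\overline{M}^{\tau}=\bigcap_{\rho\in\mathcal{S}}\overline{M}^{\rho}$ and $\bigcap_{\rho}\operatorname{Pos}(X_M\cap\mathfrak{sp}(\rho))=\operatorname{Pos}(X_M\cap\bigcup_{\rho}\mathfrak{sp}(\rho))$. Your preliminary observation that finite pointwise maxima of submultiplicative seminorms remain submultiplicative correctly justifies passing to a directed family, which is the one technical point the argument needs.
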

Note that $\mathfrak{sp}(\tau)=\bigcup_{\rho\in \mathcal{S}} \mathfrak{sp}(\rho)$ where $\S$ is a directed family of seminorms generating $\tau$ (such a family always exists since $\tau$ is a lc topology). Recall that a family $\mathcal{S}$ of seminorms on $V$ is said to be \textit{directed} if $$\forall \ \rho_1,\rho_2 \in \mathcal{S}, \ \exists \ \rho \in \mathcal{S} \text{ and } C>0 \  \text{ s.t.} \ C\rho(v) \ge \max\{ \rho_1(v), \rho_2(v)\} \ \forall \ v\in V.$$

Using Theorem \ref{ClosureRes-lmc} and Theorem \ref{GRHThm}, we get
\begin{prop}\label{coroll-lmcAlg}
Let  $(A, \tau)$ be a lcm $\RR-$algebra, $d\in\NN$, $M$ a $2d$-power module of $A$ and $L : A \rightarrow \mathbb{R}$ a linear functional. $L$ has a representing measure supported on $X_M \cap  \mathfrak{sp}(\tau)$ if and only if $L$ is $\tau-$continuous and $L(M)\subseteq[0,+\infty)$.
\end{prop}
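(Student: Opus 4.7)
The plan is to mirror the argument for Proposition \ref{coroll-SubAlg}, substituting Theorem \ref{ClosureRes-lmc} for Theorem \ref{ClosureRes}. Both implications ultimately reduce to the submultiplicative case, exploiting that $\tau$ is generated by a directed family $\S$ of submultiplicative seminorms.

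For the backward implication, I would use that $\tau$-continuity of $L$ is equivalent to the existence of $\rho\in \S$ and $C>0$ with $|L(a)|\le C\rho(a)$ for all $a\in A$, so that $L$ is continuous with respect to a single submultiplicative seminorm. Proposition \ref{coroll-SubAlg} applied with $\sigma=\rho$ then yields a representing measure $\mu$ supported on $X_M\cap \mathfrak{sp}(\rho)$, and since every $\rho$-continuous character is automatically $\tau$-continuous, $\mathfrak{sp}(\rho)\subseteq \mathfrak{sp}(\tau)$, so $\operatorname{supp}(\mu)\subseteq X_M\cap \mathfrak{sp}(\tau)$ as required. Alternatively, one can run a direct Hahn--Banach style argument: continuity of $L$ together with $L(M)\subseteq [0,\infty)$ propagate by closure to $L\ge 0$ on $\overline{M}^{\tau}$, which by Theorem \ref{ClosureRes-lmc} coincides with $\operatorname{Pos}(X_M\cap \mathfrak{sp}(\tau))$; Theorem \ref{GRHThm} then delivers the representing measure, its compactness hypothesis being satisfied with $p=1$ on the compact set $X_M\cap \mathfrak{sp}(\rho)$.

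For the forward implication, $M$-positivity is immediate: for $a\in M$ one has $\hat a\ge 0$ on $X_M\supseteq \operatorname{supp}(\mu)$, hence $L(a)=\int \hat a\, d\mu\ge 0$. To establish $\tau$-continuity I would adapt the bound used in the proof of Proposition \ref{coroll-SubAlg}: by Lemma \ref{char-GelfSp}, $|\hat a(\alpha)|\le \rho(a)$ for every $\alpha\in \mathfrak{sp}(\rho)$, so once $\operatorname{supp}(\mu)\subseteq X_M\cap \mathfrak{sp}(\rho_0)$ for some single $\rho_0\in \S$, the estimate $|L(a)|\le L(1)\rho_0(a)$ yields $\tau$-continuity at once. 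The main obstacle I foresee is establishing this concentration, namely showing that a finite Radon measure supported on the directed union $X_M\cap \bigcup_{\rho\in \S}\mathfrak{sp}(\rho)$ actually sits inside $X_M\cap \mathfrak{sp}(\rho_0)$ for some single $\rho_0$. The ingredients are inner regularity of $\mu$, compactness of each $\mathfrak{sp}(\rho)$ from Lemma \ref{char-GelfSp}, and directedness of $\S$: any compact subset of $\operatorname{supp}(\mu)$ can be covered by finitely many $\mathfrak{sp}(\rho_i)$ and then dominated by a single $\rho$ thanks to directedness, but passing from this local control to a uniform one on the full support is the delicate point that requires most of the work.
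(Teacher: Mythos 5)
Your backward implication is correct and is essentially the paper's argument (closure result plus Riesz--Haviland); the reduction to a single seminorm via directedness is in fact the cleaner of your two routes, and it is needed even for your ``alternative'' Hahn--Banach version, since Theorem \ref{GRHThm} cannot be applied with $p=1$ to $X_M\cap\mathfrak{sp}(\tau)$ directly: $\mathfrak{sp}(\tau)$ is a union of compacta but need not itself be compact.

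The gap you flag in the forward implication is genuine, and it cannot be closed in the form you propose: a finite Radon measure supported on the directed union $\bigcup_{\rho\in\S}\mathfrak{sp}(\rho)$ need \emph{not} concentrate on any single $\mathfrak{sp}(\rho_0)$. (Your finite-subcover idea also does not apply as stated, because the sets $\mathfrak{sp}(\rho)$ are compact, hence closed, but not open.) Concretely, take $A=\RR[x]$ with the lmc topology $\tau$ generated by the directed family of submultiplicative seminorms $\rho_r\bigl(\sum a_ix^i\bigr)=\sum|a_i|r^i$, $r\ge 1$; then $\mathfrak{sp}(\rho_r)=[-r,r]$ and $\mathfrak{sp}(\tau)=\RR$. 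The Gaussian measure on $\RR$ represents a positive linear functional $L$ with support contained in $X_{\sum A^2}\cap\mathfrak{sp}(\tau)$, yet $L(x^{2n})=(2n-1)!!$ eventually exceeds $Cr^{2n}$ for every fixed $r$ and $C$, so $L$ is not $\tau$-continuous. Hence the ``only if'' direction, read literally with $\mathfrak{sp}(\tau)$, fails, and no amount of work on the concentration step will rescue it. The version that is actually provable (and is the one established in the source \cite{GKM}) localizes the support in the conclusion: the representing measure is supported on $X_M\cap\mathfrak{sp}(\rho)$ for some single continuous submultiplicative seminorm $\rho$, a compact set by Lemma \ref{char-GelfSp}. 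Under that reading your estimate $|L(a)|\le\int|\hat a|\,d\mu\le L(1)\,\rho(a)$ closes the forward direction immediately, with no concentration argument required; compare Proposition \ref{PropC}, where the support is likewise pinned to a single ball $\overline{B}_n^{\|\cdot\|_{\rho}}$ precisely for this reason.
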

For certain topological unital commutative $\RR-$algebras and in particular for the algebra of polynomials, weaker assumptions than the continuity of $L$ are known to guarantee the equivalence between the non-negativity of $L$ on a quadratic module $M_2$ and the existence of a $X_{M_2}-$representing measure for $L$ (see Section \ref{sec:symmAlg} for more details). 
This motivates the following more general question.
\begin{q}\label{weaken-con}
Given a topological unital commutative $\RR$-algebra $(A, \tau)$ and a $2d$-power module $M$ of $A$, what are the weakest assumptions on an $M-$positive linear functional such that it admits a $X_M$-representing measure?\end{q}

Proposition \ref{coroll-lmcAlg} has also a direct application related to the problem of checking the $M_{2d}-$positivity of a linear functional for a given $2d-$power module $M_{2d}$. For the case $d=1$, it is well-known that given a quadratic module $M_2$ of $A$ the $M_2-$positivity of $L$ is equivalent to the positive semidefiniteness of a certain sequence of infinite Hankel matrices associated to $L$ and $M_2$. In particular, for the truncated moment problem the $M_2-$positivity of $L$ can be effectively checked through semidefinite programming techniques. 
\begin{q}\label{pos-certificate} Is there a similar criterion for the non-negativity of a linear functional on an arbitrary $2d-$power module with $d\geq 2$? 
\end{q}

Clearly, if $L$ is non-negative on a quadratic module $M_2$ then it is non-negative on any $2d-$power module $M_{2d}$ having the same set of generators as $M_2$ for any $d\geq 2$ (as $M_{2d}\subset M_2$). Thus the positive semidefiniteness of the sequence of Hankel matrices associated to $L$ and $M_2$ is a sufficient condition for the $M_{2d}-$positivity of $L$ for any $d\geq 2$. However, we do not know if/when this is also necessary.

For the case $M_{2d}=\sum A^{2d}$ we have the following criterion under the additional assumption of continuity of the considered functional.
\begin{corollary}\label{cor-powers}
Let  $(A, \tau)$ be a lmc $\RR-$algebra and $L : A \rightarrow \mathbb{R}$ a $\tau-$continuous linear functional. The following are equivalent:
\begin{enumerate}
\item $L$ is positive, i.e. $L(\sum A^{2})\subseteq[0,+\infty)$.
\item $\forall\, d\in\NN$, $L(\sum A^{2d})\subseteq[0,+\infty)$.
\item $\exists\, d\in\NN$, $L(\sum A^{2d})\subseteq[0,+\infty)$.
\end{enumerate}
\end{corollary}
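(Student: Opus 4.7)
The plan is to prove $(1)\Rightarrow(2)\Rightarrow(3)\Rightarrow(1)$, where the only non-trivial implication is $(3)\Rightarrow(1)$, which will follow immediately from Proposition \ref{coroll-lmcAlg} once we identify $X_{\sum A^{2d}}$ with the whole character space $X(A)$.

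For $(1)\Rightarrow(2)$, I would simply observe that for every $a\in A$ and every $d\in\NN$ one has $a^{2d}=(a^d)^2\in \sum A^2$, so $\sum A^{2d}\subseteq \sum A^2$ and therefore $L(\sum A^{2d})\subseteq L(\sum A^2)\subseteq[0,+\infty)$. The implication $(2)\Rightarrow(3)$ is trivial.

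The substantive step is $(3)\Rightarrow(1)$. Fix $d\in\NN$ such that $L(\sum A^{2d})\subseteq[0,+\infty)$ and apply Proposition \ref{coroll-lmcAlg} with the $2d$-power module $M=\sum A^{2d}$. Since $L$ is $\tau$-continuous by hypothesis and $M$-positive by assumption, the proposition yields a representing measure $\mu$ for $L$ supported on $X_M\cap\mathfrak{sp}(\tau)$. Now I would remark that $X_M=X(A)$: indeed, for any $\alpha\in X(A)$ and any $a\in A$ we have $\widehat{a^{2d}}(\alpha)=\alpha(a)^{2d}\ge 0$, so each element of $\sum A^{2d}$ has non-negative Gelfand transform at every point of $X(A)$. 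Hence $\mu$ is a non-negative Radon measure on $\mathfrak{sp}(\tau)$ representing $L$, and for any finite sum $\sum_i b_i^2\in\sum A^2$ we get
\[
L\Bigl(\sum_i b_i^2\Bigr)=\int_{\mathfrak{sp}(\tau)}\sum_i \alpha(b_i)^2\,\mu(d\alpha)\ge 0,
\]
which is exactly (1).

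The main conceptual content is the identity $X_{\sum A^{2d}}=X(A)$, which guarantees that the support constraint from Proposition \ref{coroll-lmcAlg} becomes vacuous and the representing measure produced from $2d$-positivity can be used to certify ordinary ($d=1$) positivity. No real obstacle is expected: everything reduces to unwinding the definitions once Proposition \ref{coroll-lmcAlg} is invoked, with the inclusion $\sum A^{2d}\subseteq\sum A^2$ providing the easy direction.
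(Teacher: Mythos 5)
Your proposal is correct and follows essentially the same route as the paper: the easy chain $(1)\Rightarrow(2)\Rightarrow(3)$ via the inclusion $\sum A^{2d}\subseteq\sum A^{2}$, and $(3)\Rightarrow(1)$ by applying Proposition \ref{coroll-lmcAlg} to $M=\sum A^{2d}$, noting $X_M=X(A)$, and integrating squares against the resulting representing measure. No substantive differences.
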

\proof
From the discussion immediately after Question \ref{pos-certificate} we know that it is always true that: (1) $\Rightarrow$ (2) $\Rightarrow$ (3). Let us show that also (3) $\Rightarrow$ (1). Suppose that $\exists\, d\in\NN$, $L(\sum A^{2d})\subseteq[0,+\infty)$. Then, by Proposition \ref{coroll-lmcAlg} applied to $M= \sum A^{2d}$,  $L$ has a representing measure $\mu$ supported on $X_M \cap  \mathfrak{sp}(\tau)$. (In this case $X_M=X(A)$ and so $supp(\mu)\subseteq \mathfrak{sp}(\tau)$.) In particular, this implies that for all $a\in \sum A^{2}$ we have
$L(a)=\int_{\mathfrak{sp}(\tau)} \hat{a}(\alpha)\mu(d\alpha)$
which is non-negative since $\mu$ is non-negative and $\hat{a}(\alpha)\geq 0$ for all $a\in \sum A^{2}$. Hence, (1) holds.\endproof
Note that while (1) $\Rightarrow$ (2) $\Rightarrow$ (3) holds in general, to show that (3) $\Rightarrow$ (1) we have used the continuity of $L$. This motivates the following question:
\begin{q}\label{pos-certificate2}
Does Corollary~\ref{cor-powers} still hold if we remove or weaken the continuity assumption?
\end{q}
Let us observe that once we have a positive answer to Question \ref{weaken-con} (i.e.\! once we have identified some conditions weaker than continuity under which the non-negativity of $L$ on some $2d-$power module implies the existence of a representing measure for $L$), then the equivalence of (1), (2) and (3) can be established along the same lines of the proof of Corollary \ref{cor-powers}. However, these same methods do not provide an answer to the following question.
\begin{q}\label{pos-certificate-modules}
Can Corollary~\ref{cor-powers} be extended to $2d-$power modules of $A$ having the same non-empty set of generators?
\end{q}

Theorem \ref{ClosureRes-lmc} can be viewed as a strengthening in the
commutative case of the result in \cite[Lemma 6.1 and Proposition 6.2]{Schmu78} for enveloping algebras. 

Having an eye on applications of the moment problem in other fields, we were naturally led to consider special kind of topological $\RR-$algebras and to investigate to which extent is possible to improve the results presented in this section for these particular cases. 

\section{The moment problem on symmetric algebras of lc real spaces}\label{sec:symmAlg}
In this section we focus on Problem \ref{GenKMP} when $A$ is the symmetric (tensor) algebra $S(V)$ of a vector space $V$ over $\RR$, i.e., the tensor algebra $T(V)$ factored by the ideal generated by the elements $v\otimes w -w\otimes v$, $v,w\in V$. If we fix a basis $x_i$, $i\in \Omega$ of $V$, then  $S(V)$ is identified with the polynomial ring $\mathbb{R}[x_i : i\in \Omega]$, i.e., the free $\mathbb{R}$-algebra in commuting variables $x_i$, $i\in \Omega$. For any integer $k\ge 0$, denote by $S(V)_k$ the $k$-th homogeneous part of $S(V)$, i.e.\! the image of $k$-th homogeneous part $V^{\otimes k}$ of $T(V)$ under the canonical map $\sum_{i=1}^n f_{i1}\otimes \cdots \otimes f_{ik} \mapsto \sum_{i=1}^n  f_{i1}\cdots f_{ik}$ with $f_{ij} \in V$ for  $i=1,\dots, n$, $j=1,\dots,k$, $n\ge 1$. Note that $S(V)_0 = \mathbb{R}$ and $S(V)_1 = V$.

Since $S(V)$ is in effect isomorphic to the polynomial ring having as variables the vectors of a basis for $V$, the symmetric algebra has always been a natural choice for the algebra on which posing the moment problem especially in infinite dimensional contexts. Indeed, since the early days of the moment theory,  several problems appearing in applied fields (e.g. statistical mechanics, quantum field theory, etc.) have been modeled as moment problems for linear functionals defined on the algebra polynomials in infinitely many variables whose representing measures are usually required to have support on an infinite-dimensional space (see the introduction for more references). Hence, the choice of considering Problem \ref{GenKMP} for the symmetric algebra of a general vector space $V$ (possibly infinite dimensional) is general enough to encompass a multitude of practical applications and clearly includes the classical case (when $V$ is finite dimensional). An early systematic study of the moment problem for linear functionals on the symmetric algebra of a locally convex nuclear space can be found e.g. in \cite[Chapter 5, Section 2]{BK}, \cite{BS}, \cite{Bor-Yng75}, \cite{Heg75}, \cite{Pow71-74}, \cite[Section~12.5]{Schmu90}. More recently, new advances were obtained both for specific choices of the locally convex nuclear space $V$ (see \cite{IK}, \cite{IKR}) and for the general case of the algebra of polynomials in an arbitrary set of variables $\{x_i;\> i \in \Omega\}$ (see \cite{AJK}, \cite{GKM-IMP}).

Considering the general results presented in the previous section, it seemed natural to look for lmc topologies on the symmetric algebra in order to be able to apply Proposition \ref{coroll-lmcAlg} to this kind of algebras.  In \cite{GIKM} it is indeed explained how a lc topology $\tau$ on a real vector space $V$ can be extended to a lmc topology $\overline{\tau}$ on the symmetric algebra $S(V)$ and a complete criterion for the existence of a solution to the $K$-moment problem for $\bar{\tau}$-continuous linear functionals on $S(V)$ is given. Furthermore, \cite{GIKM} contains a detailed comparison between these results and the ones in \cite[Theorem 2.1]{BK}, \cite{BS}, \cite[Theorem 2.3]{IKR} for the $K$-moment problem on locally convex nuclear spaces. Starting from this comparison, we outline some connections to further previous works on the moment problem on symmetric algebras of lc nuclear spaces and pose some questions which naturally emerge from this analysis.

We start by briefly reporting the main results of \cite{GIKM} in the case $V$ is a $\RR-$vector space with the topology given by a single seminorm $\rho$. The procedure to extend $\rho$ on $V$ to a submultiplicative seminorm $\overline{\rho}$ on $S(V)$ can be summarized as follows.
\begin{enumerate} 
\item For $k\in\NN$, let us consider the projective tensor seminorm on $V^{\otimes k}$, i.e.
$$
 \rho^{\otimes k}(g):= \inf\left\{ \sum_{i=1}^N \rho(g_{i1}) \cdots \rho_k(g_{ik}) : g= \sum_{i=1}^N g_{i1}\otimes \cdots \otimes g_{ik}, \ g_{ij} \in V, \ N\in\NN\right\}.
$$
\item Denote by $\pi_k : V^{\otimes k} \rightarrow S(V)_k$ the quotient map $\pi$ restricted to $V^{\otimes k}$ and
define $\overline{\rho}_k$ to be the quotient seminorm on $S(V)_k$ induced by $\rho^{\otimes k}$, i.e.  
\begin{align*}\overline{\rho}_k(f) :=& \inf\{ \rho^{\otimes k}(g) : g \in V^{\otimes k}, \ \pi_k(g)=f\} \\ 
=&  \inf\left\{ \sum_{i=1}^N \rho(f_{i1})\cdots \rho(f_{ik}) : f = \sum_{i=1}^N f_{i1}\cdots f_{ik}, f_{ij} \in V, N\in\NN \right\}.\end{align*}
Define $\overline{\rho}_0$ to be the usual absolute value on $\RR$.
\item For any $h\in S(V)$, say $h= h_0+\dots + h_{\ell}$, $h_k \in S(V)_k$, $k=0,\dots,\ell$, define $$\overline{\rho}(h) := \sum_{k=0}^{\ell} \overline{\rho}_k(h_k).$$
\end{enumerate}
The seminorm $\overline{\rho}$ is called the \emph{projective extension of $\rho$ to $S(V)$} and in \cite[Proposition~3.2]{GIKM} it is proved to be submultiplicative, i.e. $\overline{\rho}(f\cdot g)\leq\overline{\rho}(f)\overline{\rho}(g),\,\,\forall f,g\in S(V)$. Proposition \ref{coroll-SubAlg} can be therefore applied and, exploiting the universal property of the symmetric algebra, it is possible to show the following result.

\begin{prop}{\cite[Corollary 3.7]{GIKM}} \label{PropB} 
Let $(V,\rho)$ be a seminormed $\mathbb{R}$-vector space, $M$ a $2d$-power module of $S(V)$ and 
$L : S(V)\rightarrow \mathbb{R}$ a linear functional. $L$ is $\overline{\rho}$-continuous and 
$L(M)\subseteq [0,+\infty)$ if and only if $\exists\, !\, \mu$ on $V^*$ such that $$L(f)=\int\limits_{V^*}\hat{f}(\alpha) \mu(d\alpha)\quad\text{ and}\quad \operatorname{supp}\mu\subseteq X_M\cap \overline{B}_1^{\|\cdot\|_{\rho}},$$
where $V^*$ denotes the algebraic dual of $V$, $\|\cdot\|_{\rho}$ is the operator norm on $V^*$, i.e. $\|\beta\|_{\rho}:=\sup\limits_{\stackrel{v\in V}{\rho(v)\leq 1}}|\beta(v)|$
and $\overline{B}_1^{\|\cdot\|_{\rho}}:= \{ \beta\in V^*: \|\beta\|_{\rho}\leq 1\}.$ 
\end{prop}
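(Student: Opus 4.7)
The plan is to reduce to Proposition~\ref{coroll-SubAlg} applied to the submultiplicative seminormed algebra $(S(V),\overline{\rho})$, and then transport the resulting representing measure from the Gelfand spectrum $\mathfrak{sp}(\overline{\rho})\subseteq X(S(V))$ onto $V^*$ via the universal property of the symmetric algebra. That universal property yields a bijection $V^*\to X(S(V))$, $\beta\mapsto\alpha_\beta$, where $\alpha_\beta$ is the unique $\mathbb{R}$-algebra homomorphism extending $\beta$. Equipping $V^*$ with the weak topology induced by $\mathbb{R}^V$ and $X(S(V))$ with $\tau_{X(S(V))}$, this bijection is a homeomorphism because $\alpha_\beta(f)$ depends polynomially on finitely many values $\beta(v_i)$; moreover, under this identification the Gelfand transform $\hat f$ becomes the natural evaluation $\beta\mapsto\alpha_\beta(f)$ on $V^*$.

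The crux of the argument is to show that $\mathfrak{sp}(\overline{\rho})$ corresponds exactly to $\overline{B}_1^{\|\cdot\|_{\rho}}$ under this identification. By Lemma~\ref{char-GelfSp}, $\alpha_\beta\in\mathfrak{sp}(\overline{\rho})$ iff $|\alpha_\beta(h)|\leq\overline{\rho}(h)$ for all $h\in S(V)$. If $\|\beta\|_\rho\leq 1$, then for any representation $h=\sum_i f_{i1}\cdots f_{ik}$ of a homogeneous $h\in S(V)_k$ one has $|\alpha_\beta(h)|\leq\sum_i\rho(f_{i1})\cdots\rho(f_{ik})$; passing to the infimum gives $|\alpha_\beta(h)|\leq\overline{\rho}_k(h)$, and summing over homogeneous components shows $|\alpha_\beta(h)|\leq\overline{\rho}(h)$ for every $h$. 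Conversely, restricting $|\alpha_\beta|\leq\overline{\rho}$ to $V=S(V)_1$ together with the identity $\overline{\rho}_1=\rho$ (an immediate triangle-inequality computation) yields $\|\beta\|_\rho\leq 1$. This matching is the main obstacle, as it requires the universal-property description of $X(S(V))$ and the tensor-level definition of $\overline{\rho}$ to be brought into play simultaneously.

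With these identifications in place, Proposition~\ref{coroll-SubAlg} gives the forward direction: if $L$ is $\overline{\rho}$-continuous and $M$-positive then $L$ admits a representing Radon measure on $X_M\cap\mathfrak{sp}(\overline{\rho})$, which transports to a Radon measure $\mu$ on $V^*$ with $\operatorname{supp}\mu\subseteq X_M\cap\overline{B}_1^{\|\cdot\|_\rho}$. Uniqueness follows from compactness of the support (essentially Banach--Alaoglu, matching the compactness of $\mathfrak{sp}(\overline{\rho})$ from Lemma~\ref{char-GelfSp}) together with a Stone--Weierstrass argument: the Gelfand transforms form a point-separating subalgebra of $C(\overline{B}_1^{\|\cdot\|_\rho})$ containing constants, hence are uniformly dense, so any compactly supported Radon measure is determined by its integrals against them. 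The converse direction is routine: $|\hat{f}(\beta)|\leq\overline{\rho}(f)$ on the compact support gives $|L(f)|\leq\mu(V^*)\,\overline{\rho}(f)$, whence $\overline{\rho}$-continuity, while $\operatorname{supp}\mu\subseteq X_M$ forces $M$-positivity.
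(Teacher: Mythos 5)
Your proposal is correct and follows exactly the route the paper indicates: apply Proposition~\ref{coroll-SubAlg} to the submultiplicative seminormed algebra $(S(V),\overline{\rho})$ and use the universal property of $S(V)$ to identify $X(S(V))$ with $V^*$ and $\mathfrak{sp}(\overline{\rho})$ with $\overline{B}_1^{\|\cdot\|_{\rho}}$ (via Lemma~\ref{char-GelfSp} and the identity $\overline{\rho}_1=\rho$). The details you supply --- the two-sided estimate matching the Gelfand spectrum with the unit ball, and the Stone--Weierstrass argument for uniqueness on the compact support --- are sound and fill in precisely what the paper leaves as a sketch.
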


\begin{rem}\label{rem-ineq-ext}
For any two seminorms $\rho_1$ and $\rho_2$ on $V$, we write $\rho_1 \succeq \rho_2$ to indicate that there exists $c>0$ such that $c\rho_1(v) \ge \rho_2(v), \ \forall \ v\in V.$ It is important to note that $\rho_1 \succeq \rho_2$ does not imply in general that $\overline{\rho_1} \succeq \overline{\rho_2}$. However, \cite[Proposition~3.4]{GIKM} guarantees that there exists $C>0$ such that $\overline{C\rho_1} \succeq \overline{\rho_2}$. \end{rem}

Let us consider now the case when $\tau$ is any locally convex topology on a $\RR-$vector space $V$ and let $\P$ be a directed family of seminorms generating $\tau$. In \cite{GIKM} the authors consider the topology $\overline{\tau}$ on $S(V)$ generated by the family of seminorms $\Q:=\{\overline{n\rho}:  \rho\in \P,\,\,n\in\NN\}$, which is directed in virtue of Remark \ref{rem-ineq-ext}. Hence, by using Proposition \ref{coroll-lmcAlg} and the universal property of the symmetric algebra, it is possible show the following result.
\begin{prop}\label{PropC}
Let $(V,\tau)$ be a lc topological vector space over $\RR$ whose topology is generated by a directed family of seminorms $\P$. Let $M$ be a $2d$-power module of $S(V)$ and 
$L : S(V)\rightarrow \mathbb{R}$ a linear functional. $L$ is $\overline{\tau}$-continuous and 
$L(M)\subseteq [0,+\infty)$ iff $\exists\, !\, \mu$ on $V^*$: $L(f)=\int\limits_{V^*}\hat{f}(\alpha) \mu(d\alpha)$ and $\operatorname{supp}\mu\subseteq X_M\cap \overline{B}_n^{\|\cdot\|_{\rho}}$
for some $n\in\NN$ and $\rho\in \P$.
\end{prop}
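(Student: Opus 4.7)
The plan is to reduce Proposition \ref{PropC} to Proposition \ref{PropB} by using the directedness of the generating family $\Q = \{\overline{n\rho} : \rho \in \P,\, n \in \NN\}$ of $\overline{\tau}$, so that $\overline{\tau}$-continuity of $L$ collapses to continuity with respect to a single submultiplicative seminorm $\overline{n\rho}$. Once this single-seminorm case is reached, the already established integral representation from Proposition \ref{PropB} applies, with a cosmetic rescaling turning its support condition $\overline{B}_1^{\|\cdot\|_{n\rho}}$ into $\overline{B}_n^{\|\cdot\|_{\rho}}$.

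For the forward direction, assume $L$ is $\overline{\tau}$-continuous and $L(M)\subseteq [0,+\infty)$. Because $\overline{\tau}$ is generated by the directed family $\Q$, continuity of $L$ at the origin furnishes $\rho\in\P$, $n\in\NN$ and $C>0$ with $|L(f)|\leq C\overline{n\rho}(f)$ for every $f\in S(V)$, that is, $L$ is a continuous linear functional on the submultiplicative seminormed $\RR$-algebra $(S(V),\overline{n\rho})$. Since $\overline{n\rho}$ is precisely the projective extension of the seminorm $n\rho$ on $V$, I would then apply Proposition \ref{PropB} to the seminormed space $(V,n\rho)$ in place of $(V,\rho)$. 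This produces a unique Radon measure $\mu$ on $V^*$ with $L(f)=\int_{V^*}\hat{f}(\alpha)\mu(d\alpha)$ and $\operatorname{supp}\mu\subseteq X_M\cap \overline{B}_1^{\|\cdot\|_{n\rho}}$. Using the identity $\|\beta\|_{n\rho}=n^{-1}\|\beta\|_{\rho}$, one has $\overline{B}_1^{\|\cdot\|_{n\rho}}=\overline{B}_n^{\|\cdot\|_{\rho}}$, which delivers the required support condition.

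For the reverse direction, suppose such a $\mu$ exists with $\operatorname{supp}\mu\subseteq X_M\cap \overline{B}_n^{\|\cdot\|_{\rho}}$ for some $\rho\in\P$ and $n\in\NN$. Then for any $g\in M$ the integrand $\hat g$ is non-negative on $X_M\supseteq\operatorname{supp}\mu$, giving $L(g)\geq 0$, so $L$ is $M$-positive. Moreover, applying the converse implication in Proposition \ref{PropB} to $(V,n\rho)$ shows that $L$ is $\overline{n\rho}$-continuous, and hence $\overline{\tau}$-continuous because $\overline{n\rho}\in \Q$. Uniqueness of $\mu$ is inherited directly from the uniqueness clause of Proposition \ref{PropB}.

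The one genuine obstacle is the step that extracts a single dominating seminorm $\overline{n\rho}$ from $\overline{\tau}$-continuity of $L$; this relies on the directedness of $\Q$, which is exactly what Remark \ref{rem-ineq-ext} guarantees via \cite[Proposition 3.4]{GIKM}. Everything else in the argument is a transparent invocation of the single-seminorm case, together with the rescaling $\overline{B}_1^{\|\cdot\|_{n\rho}}=\overline{B}_n^{\|\cdot\|_{\rho}}$.
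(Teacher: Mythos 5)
Your proof is correct and follows essentially the same route as the paper, which obtains Proposition \ref{PropC} from the lmc machinery of Section 2 (Proposition \ref{coroll-lmcAlg}) combined with the universal property of the symmetric algebra; your use of the directedness of $\Q$ to extract a single dominating seminorm $\overline{n\rho}$ and then invoke the single-seminorm case (Proposition \ref{PropB}) is precisely how that machinery localizes the support to one ball. The rescaling identity $\overline{B}_1^{\|\cdot\|_{n\rho}}=\overline{B}_n^{\|\cdot\|_{\rho}}$ and the converse checks (positivity from $\operatorname{supp}\mu\subseteq X_M$, continuity from $\overline{n\rho}\in\Q$) are all as intended in the paper.
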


Note that $\overline{\tau}$ is the finest lmc topology $\overline{\tau}$ on $S(V)$ extending $\tau$ (see \cite[Proposition~5.1]{GIKM}). Therefore, Proposition \ref{PropC} holds for all linear functionals on $S(V)$ which are continuous w.r.t.\! any other lmc topology on $S(V)$ extending $\tau$. However, it is not clear to us how to construct further topologies on $S(V)$ fulfilling these properties. For instance, we already do not know if it is possible to extend $\tau$ to a lmc topology on $S(V)$ using the injective topology on the tensor power $V^{\otimes k}$ instead of the projective one as in \cite{GIKM}.
\begin{q}
Is it possible to give a characterization of all the lmc topologies on $S(V)$ extending $\tau$?
\end{q}
More in general, one could ask if there exists any other topology out of this class for which a result of the kind in Proposition \ref{PropC} would be still true, i.e.
\begin{q}\label{all-top}
Can we characterize all the topologies on $S(V)$ for which an analogous result to Proposition \ref{PropC} holds?
\end{q}

The assumption of $\bar{\tau}$-continuous is actually pretty strong. Already for the finite dimensional case $V=\RR^d$ analogues of Proposition \ref{PropC} hold under weaker assumptions than the continuity of $L$. Indeed, in \cite[Theorem 10]{Nuss65} Nussbaum proved that whenever $L: \RR[x_1, \ldots, x_d]\to\RR$ fulfills the so-called Carleman's condition i.e.
\begin{equation}\label{mult-Carleman}
  \sum\limits_{n=1}^{\infty}{L(x_j^{2n})}^{-\frac{1}{2n}}=\infty,\quad \forall\, j=1,\dots,d,
\end{equation}
then $L$ positive is equivalent to the existence of a representing measure for $L$ supported on $\RR^d$. Note that if $L$ is continuous w.r.t.\! any lmc topology $\omega$ on $\RR[x_1, \ldots, x_d]$ then \eqref{mult-Carleman} holds. In fact, if $\mathcal{S}$ is a directed family of submultiplicative seminorms generating $\omega$ then the $\omega-$continuity of $L$ implies there exist $\rho \in \mathcal{S}$ and $C>0$ such that
 $|L(f)|\le C \rho(f)$ for all $f\in \RR[x_1, \ldots, x_d]$.
In particular for any $n\ge 1$ and any $j=1,\ldots, d$, we get $|L(x_j^{2n})| \le C \rho(x_j)^{2n}$ which implies \eqref{mult-Carleman}.

Nussbaum's result still holds under weaker conditions than \eqref{mult-Carleman} as showed in \cite[Theorem 4.9]{M-I}, \cite[Theorem 0.1]{M-II} and \cite[Proposition 1]{Schmu91} (see also \cite[Lemma 0.2 and Theorem 0.3]{M-II} for more details about the relation between these conditions). Using Nussbaum's result (resp.\! its stronger version \cite[Theorem 0.1]{M-II}), in \cite[Theorem 4.1]{IKR} (resp. \cite[Corollary 0.6]{M-II}) it is showed that for any linear functional $L$ on $\RR[x_1, \ldots, x_d]$ satisfying \eqref{mult-Carleman} (resp. equation (0.1) in \cite[Theorem 0.1]{M-II}) the non-negativity of $L$ on a quadratic module $M_2$ (even uncountably generated) of $\RR[x_1, \ldots, x_d]$ is equivalent to the existence of a $X_{M_2}-$representing measure for $L$. 
The results mentioned above have been generalized to the case of infinitely many variables in \cite[Section 4 and 5]{GKM-IMP}. In particular, the corresponding results for linear functionals non-negative on quadratic modules of $\RR[x_i, i\in \Omega]$ with $\Omega$ possibly uncountable are \cite[Theorem 5.1 and~5.4]{GKM-IMP}, which hold under further assumptions on the quadratic modules. Note that in the uncountable case the representing measures provided by these results in \cite{GKM-IMP} are not necessarily Radon. 

Similar results were obtained in \cite[Vol. II, Chapter 5, Section 2]{BK}, \cite{BS}, and \cite{IKR}, \cite[Section 3]{I}  when $V$ belongs to a certain class of nuclear spaces. Namely, $(V, \tau)$ is assumed to be: separable, the projective limit of a family $(H_j)_{j\in J}$ of Hilbert spaces ($J$ is an index set containing~$0$) which is directed by topological embedding such that each $H_j$ is embedded topologically into $H_0$, and nuclear, i.e.\! for each $j_1\in J$ there exists $j_2\in J$ such that the embedding $H_{j_2}\hookrightarrow H_{j_1}$ is quasi-nuclear. Thus $\tau$ is the locally convex topology on $V$ induced by the directed family $\P$ of norms on $V$, where $\P$ consists of the norms on $V$ which are induced by the embeddings $V \hookrightarrow H_j$, $j \in J$. The topology $\tau$ is usually referred to as the \it projective topology \rm on $V$ and it is clearly a Hausdorff topology. In this setting Berezansky, Kondratiev, and \v Sifrin proved in \cite[Vol. II, Theorem~2.1]{BK} and \cite{BS} a Nussbaum's type result, which we restate here using the formulation given in \cite[Theorem 6.1]{GIKM}.
\begin{theorem} \label{nuclear} Let $(V,\tau)$ be a nuclear space of the special sort described above and $L : S(V) \rightarrow \mathbb{R}$ be a linear functional. Assume:

(1) for each $k\ge 0$ the restriction map $L : S(V)_k \rightarrow \mathbb{R}$ is continuous with respect to the locally convex
topology $\overline{\tau}_k$
on $S(V)_k$ induced by the norms $\{\overline{\rho}_k : \rho \in \mathcal{P}\}$; and

(2) there exists a countable subset $E$ of $V$ whose linear span is dense in $(V,\tau)$ such that
$$ m_0 := \sqrt{L(1)}, \text{ and } m_k := \sqrt{\sup_{f_1, \dots ,f_{2k} \in E} |L(f_1\dots f_{2k})|}, \text{ for } k \ge 1$$ are finite and the class $C\{ m_k\}$
is quasi-analytic.

Then  $L$ is positive, i.e.\! $L(\sum S(V)^2) \subseteq [0,\infty)$, if and only if there exists a non-negative Radon measure $\mu$ on the dual space $V^*$ supported by the topological dual $V'$ of $(V,\tau)$ such that $L(f)= \int \hat{f} d\mu$ $\forall$ $f\in S(V)$.
\end{theorem}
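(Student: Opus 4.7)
The easy direction (measure $\Rightarrow$ positivity) is immediate, since any representing measure $\mu$ makes $L(f^2)=\int \hat{f}(\alpha)^2\,\mu(d\alpha)\ge 0$. The substantive content is the converse, and my plan follows the classical Berezansky--Kondratiev--\v{S}ifrin strategy, adapted to extract support in the topological dual from nuclearity and condition~(1).

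First I would perform a GNS construction: positivity of $L$ makes $\langle f,g\rangle := L(fg)$ a positive semidefinite Hermitian form on $S(V)$, so after quotienting by its null ideal $\mathcal{N}$ and completing we obtain a Hilbert space $\mathcal{H}$ in which the class $\Omega$ of $1$ is cyclic for the commutative $*$-algebra of multiplication operators $\{M_f : f\in S(V)\}$. For each $v\in V$, $M_v$ is symmetric on the dense subspace $S(V)/\mathcal{N}$, and the family $\{M_v\}_{v\in V}$ commutes pairwise because $S(V)$ is commutative.

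Next I would use the quasi-analyticity hypothesis in (2) to promote $\{M_v : v\in E\}$ to a \emph{strongly} commuting family of self-adjoint operators. The definition of $m_k$ yields the bound $\|M_{f_1}\cdots M_{f_{2k}}\Omega\|^2=|L(f_1\cdots f_{2k}\cdot f_1\cdots f_{2k})|\le m_{2k}^4$ for $f_i\in E$; combined with the quasi-analyticity of the class $C\{m_k\}$, a Nussbaum-type theorem on quasi-analytic vectors shows that each $M_v$ ($v\in E$) is essentially self-adjoint with $\Omega$ a joint quasi-analytic vector for the whole commuting family. Consequently, by the spectral theorem for strongly commuting self-adjoint tuples, there is a unique joint projection-valued measure on $\RR^E$, and setting $\mu_E(\cdot):=\langle P(\cdot)\Omega,\Omega\rangle$ produces a Radon probability-type measure on $\RR^E$ that represents $L$ on the subalgebra $\RR[E]\subseteq S(V)$.

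The main obstacle, and the step where nuclearity and condition~(1) both enter critically, is upgrading $\mu_E$ on $\RR^E$ to a Radon measure on $V^*$ whose support actually lies in the topological dual $V'$, and verifying the integral formula on \emph{all} of $S(V)$ (not only on polynomials in $E$). My plan is to form the characteristic functional $C(v):=\int_{\RR^E} e^{i\langle\alpha,v\rangle}\mu_E(d\alpha)$ for $v$ in the dense linear span of $E$ and to show, using the continuity hypothesis~(1) together with the moment estimates $|L(v^{2k})|$ bounded in terms of the norms $\overline{\rho}_k$, that $C$ extends to a continuous positive-definite function on $(V,\tau)$. At this point the Bochner--Minlos theorem for nuclear spaces produces a Radon probability measure $\mu$ on $V'$ with characteristic functional $C$; expanding $C$ in a Taylor series at the origin and matching moments against $L$ on each homogeneous component $S(V)_k$ (using condition~(1) once more to justify the passage to the $\tau$-closure and the density of the span of $E$) yields $L(f)=\int_{V'}\hat{f}\,d\mu$ for every $f\in S(V)$. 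Viewing $\mu$ as a measure on $V^*$ supported by $V'$ completes the proof.
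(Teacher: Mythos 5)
The paper does not prove Theorem \ref{nuclear} at all: it is a survey, and this statement is quoted verbatim (in the reformulation of \cite[Theorem 6.1]{GIKM}) from Berezansky--Kondratiev \cite[Vol.~II, Theorem~2.1]{BK} and Berezansky--\v{S}ifrin \cite{BS}. So the only meaningful comparison is with the proof in those sources. Your outline is essentially the classical one: GNS representation of the positive functional, essential self-adjointness of the multiplication operators via quasi-analytic vectors, a joint spectral measure for the resulting commuting family, and nuclearity to force the support into $V'$. Where you diverge is in the last stage: Berezansky--Kondratiev obtain the measure on $V'$ directly from the \emph{projection spectral theorem} for a commuting family of self-adjoint operators in a nuclear rigging $V\hookrightarrow H_0 \hookrightarrow V'$ (the spectral measure is concentrated on generalized joint eigenvectors, which land in a negative space $H_{-j}\subseteq V'$ by quasi-nuclearity of the embeddings), whereas you first build a measure on $\mathbb{R}^E$ and then transport it to $V'$ via the characteristic functional and Bochner--Minlos. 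Your detour is legitimate and arguably more modular, but it does not avoid nuclearity (Minlos needs it) and it adds a moment-matching step that the projection spectral theorem gives you for free.

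Two places in your sketch need real content rather than assertion. First, essential self-adjointness of each $M_v$ separately does \emph{not} yield strong commutativity of the closures (Nelson's example); the whole point of taking the supremum over all $2k$-tuples from $E$ in the definition of $m_k$ is to make $\Omega$ a \emph{joint} quasi-analytic vector, and the theorem you need is the multivariate Nussbaum-type result (as in \cite{BK}) that a total set of such vectors forces the spectral projections to commute. You gesture at this but it is the technical heart of the proof, not a corollary of the one-variable statement. (Also, your bound should read $\|M_{f_1}\cdots M_{f_{2k}}\Omega\|^2\le m_{2k}^2$, not $m_{2k}^4$, since $m_{2k}^2$ is already the supremum of $|L|$ over products of $4k$ elements of $E$.) Second, in the Minlos stage, ``expanding $C$ in a Taylor series'' is not how the moments are recovered: what you actually need is (a) continuity of $C$ at $0$, which follows from $|1-C(v)|\le L(v^2)^{1/2}\lesssim\rho(v)$ using hypothesis (1) on $S(V)_2$; (b) Fourier uniqueness to identify the finite-dimensional marginals of $\mu$ along $\operatorname{span}(E)$ with those of $\mu_E$, which gives $L(f)=\int\hat f\,d\mu$ on $\mathbb{R}[E]$; and (c) a density-plus-uniform-integrability argument, using hypothesis (1) on each $S(V)_k$ together with moment bounds on $\mu$, to pass from $\mathbb{R}[E]$ to all of $S(V)$. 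With these points filled in, your plan reproduces a correct proof.
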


We will refer to Conditions (1) and (2) combined as \it determining \rm condition following the notation of  \cite{I}, \cite{IKM} and \cite{IKR}. For a comparison between the determining condition, the infinite dimensional version of Carleman's condition and the other conditions appearing in \cite{GKM-IMP} see \cite[Remark 3.2]{IKM}. In particular, if $L$ is $\overline{\tau}$-continuous then $L$ is determining (see \cite[Remark 6.2, (9)-(10)]{GIKM} for more details). Therefore, when we restrict our attention to this special class of locally convex nuclear spaces and to positive functionals, Proposition~\ref{PropC} is less general than Theorem~\ref{nuclear}. However, as pointed out in \cite[Remark 6.2 (8), (11)]{GIKM}, Proposition~\ref{PropC} is more general than Theorem~\ref{nuclear} because it holds for any locally convex topological space (not just separable and nuclear) and for arbitrary $2d$-power modules (not just for $\sum S(V)^2$) providing in this way better information about the support of the representing measure. This comparison leads to the following question.
\begin{q}\label{weak-cont-nuclear1}
Is it possible to generalize Proposition \ref{PropC} by weakening the continuity assumption on $L: S(V)\to \RR$ without any further assumption on~$V$? How would this affect the support of the representing measure?
\end{q} 

An encouraging result in this direction is given by \cite[Theorem 2.3]{IKR} for the special case when $V = \mathcal{C}_c^{\infty}(\mathbb{R}^n)$, the set of all infinitely differentiable functions with compact support contained in $\mathbb{R}^n$. Note that $\mathcal{C}_c^{\infty}(\mathbb{R}^n)$ can be constructed as the projective limit of a family of weighted Sobolev spaces and so endowed with the corresponding projective topology $\tau_{proj}$, which actually makes it a nuclear space belonging to the class for which Theorem~\ref{nuclear} holds (see \cite{IKR} for more details). The choice of this particular space seems to be not so restrictive from the point of view of moment problems appearing in applied fields, which very often deal with representing measures supported on non-compact subsets of the space of generalized functions as the topological dual of $(\mathcal{C}_c^{\infty}(\mathbb{R}^n), \tau_{proj})$. For convenience, let us restate here \cite[Theorem 2.3]{IKR} in the notation used so far.
\begin{theorem}\label{IKR-Thm}
Let $V=\mathcal{C}_c^{\infty}(\mathbb{R}^n)$ be endowed with the topology $\tau_{proj}$ described above, $M_2$ be a quadratic module of $S(V)$ and 
$L : S(V)\rightarrow \mathbb{R}$ a linear functional. Assume that
$L$ is determining. Then 
$L(M_2)\subseteq [0,+\infty)$ iff $\exists\, !\, \mu$ on $V^*$: $L(f)=\int_{V^*}\hat{f}(\alpha) \mu(d\alpha)$ and $\operatorname{supp}\mu\subseteq X_{M_2}\cap V'$.
\end{theorem}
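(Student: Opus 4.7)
The plan is to derive Theorem \ref{IKR-Thm} from the Berezansky-Kondratiev-\v Sifrin result (Theorem \ref{nuclear}) and then refine the support of the resulting measure by exploiting the generators of $M_2$. The easy direction is immediate: if such a $\mu$ exists, then every $a\in M_2$ satisfies $\hat{a}\ge 0$ on $X_{M_2}\supseteq \operatorname{supp}\mu$, hence $L(a)=\int_{V^*}\hat{a}\,d\mu\ge 0$.

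For the nontrivial direction, assume $L(M_2)\subseteq [0,+\infty)$. Since $\sum S(V)^2\subseteq M_2$, $L$ is positive on squares. The determining hypothesis combined with the fact that $(\mathcal{C}_c^{\infty}(\RR^n),\tau_{proj})$ belongs to the class of nuclear spaces covered by Theorem \ref{nuclear} allows us to invoke that theorem, producing a non-negative Radon measure $\mu$ on $V^*$ with $\operatorname{supp}\mu\subseteq V'$ and $L(f)=\int_{V^*}\hat{f}\,d\mu$ for every $f\in S(V)$. Uniqueness of $\mu$ should follow from the determining condition itself: the quasi-analyticity of the class $C\{m_k\}$ plays the role of a multidimensional Denjoy-Carleman criterion, ensuring that $L$ admits at most one representing Radon measure supported on $V'$.

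It remains to show $\operatorname{supp}\mu\subseteq X_{M_2}$. Writing $M_2=\sum S(V)^2+\sum_{j\in J}p_j\sum S(V)^2$, one has $p_j f^2\in M_2$ for every $j\in J$ and every $f\in S(V)$, whence $\int_{V^*}\hat{p}_j\,\hat{f}^2\,d\mu\ge 0$. I would deduce $\hat{p}_j\ge 0$ $\mu$-almost everywhere by considering the localized functional $L_j(f):=L(p_j f)$: it is positive on squares, and one verifies that it inherits a determining-type property from $L$ with growth constants controlled by the original $m_k$. Theorem \ref{nuclear} then yields a non-negative Radon measure $\mu_j$ on $V^*$ representing $L_j$. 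Since both $\hat{p}_j\,\mu$ (signed) and $\mu_j$ represent $L_j$ as functionals on $S(V)$, the uniqueness clause forces them to coincide, so $\hat{p}_j\,d\mu=d\mu_j\ge 0$, i.e.\ $\hat{p}_j\ge 0$ $\mu$-a.e. Intersecting over $j\in J$ gives $\operatorname{supp}\mu\subseteq X_{M_2}$, completing the argument.

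The main obstacle is this last step, and more precisely the control of the moment-growth constants $m_k^{(j)}$ associated with $L_j$ in terms of the original $m_k$, so that the class $C\{m_k^{(j)}\}$ remains quasi-analytic and Theorem \ref{nuclear} can legitimately be reapplied to $L_j$. Should this estimate fail for arbitrary generators $p_j$, a backup strategy would be to bypass the localization and work directly with the Hahn-Jordan decomposition of the signed measure $\hat{p}_j\,d\mu$, showing that $S(V)$ is dense enough on $V^*$ to separate its positive and negative parts; this route, however, would demand substantial approximation theory on $V^*$ and is likely harder than the localization argument above.
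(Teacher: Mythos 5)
The paper does not actually prove Theorem~\ref{IKR-Thm}: it is restated verbatim from \cite[Theorem 2.3]{IKR} and cited without argument. Your strategy is, however, essentially the one used in \cite{IKR}: obtain existence and uniqueness of $\mu$ from the Berezansky--Kondratiev--\v Sifrin theorem (Theorem~\ref{nuclear}) applied to the positive functional $L$, then localize the support by showing that each $L_j(f):=L(p_jf)$ is positive and still determining, and identify its representing measure with $\hat{p}_j\,d\mu$. Two points in your sketch need more care than you give them. First, the identification step is not a direct application of the uniqueness clause of Theorem~\ref{nuclear}, since that clause concerns \emph{non-negative} Radon measures while $\hat{p}_j\,d\mu$ is a priori signed; the standard repair (and the one effectively used in \cite{IKR}) is to compare the non-negative measures $\mu_j+(\hat{p}_j)^-\mu$ and $(\hat{p}_j)^+\mu$, which represent the same determining functional, and conclude $(\hat{p}_j)^-\mu=0$ --- i.e.\ one needs a determinacy statement robust enough to cover this comparison, not just uniqueness for $L_j$ itself. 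Second, the ``main obstacle'' you flag is real but surmountable: Cauchy--Schwarz for the positive form $(f,g)\mapsto L(fg)$ gives $m_k^{(j)}\lesssim \sqrt{m_{2k+\deg p_j}}$ up to constants, and quasi-analyticity of the class $C\{\sqrt{m_{2k}}\}$ follows from that of $C\{m_k\}$ after log-convex regularization (a classical Denjoy--Carleman-type lemma, proved in \cite{IKR}); this is where the restriction to quadratic modules ($d=1$) and to generators of finite degree enters. With these two points supplied, your outline matches the actual proof.
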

It is not clear to us if such a result would still hold for any $2d-$power module providing, at least for this particular choice of $V$, a generalization of Proposition \ref{PropC} and a partial positive answer to Question \ref{weak-cont-nuclear1}. Moreover, Theorem \ref{IKR-Thm} also covers the case of non-compactly supported representing measures while the continuity of $L$ in Proposition \ref{PropC} enforces the support of the representing measure to be compact. Therefore, it would be extremely interesting to get such a result for a larger class of spaces including $(\mathcal{C}_c^{\infty}(\mathbb{R}^n), \tau_{proj})$ and for any $2d-$power module rather than  just for $d=1$. Hence:
\begin{q}\label{weak-cont-nuclear2}
What are the weakest possible assumptions on $V$ s.t. Theorem~\ref{IKR-Thm} holds? Under these assumptions, would the result still hold for any $2d-$power module (not just for $d=1$)?
\end{q} 
The comparison of Proposition \ref{PropC} with Theorem~\ref{nuclear} and Theorem \ref{IKR-Thm} leads to the following general question (which includes Questions \ref{weak-cont-nuclear1} and \ref{weak-cont-nuclear2}).
\begin{q}\label{weak-cont-nuclear}
What are the weakest possible assumptions on the lc space $V$ and on the linear functional $L:S(V)\to\RR$ s.t. the nonnegativity of $L$ on a $2d-$power module $M$ is equivalent to the existence of a $X_M$-representing measure for $L$?
\end{q} 

As mentioned above there are several other papers in literature dealing with the moment problem on the symmetric algebra of a nuclear space. In particular, in \cite[Section 4]{Bor-Yng75} the authors consider a different class of nuclear spaces than the one described above; namely, $V$ is taken to be a real nuclear space which is a strict inductive limit of a countable family $(V_s)_{s\in \Omega}$ of its subspaces. For each $s\in \Omega$, they consider the tensor power $V_s^{\otimes k}$ is endowed with the $\pi-$topology and define $V^{\otimes k}$ as the inductive limit of $(V_s^{\otimes k})_{s\in \Omega}$ endowed with the corresponding inductive limit topology. This gives a natural quotient topology on $S(V)_k$ and $S(V)$ is endowed with the lc direct sum topology which we denote in the following by $\tau_{BY}$. For $\tau_{BY}-$continuous linear functionals on $S(V)$, Borchers and Yngvason proved a generalization of the classical Riesz-Haviland theorem, which we restated here in our notation.
\begin{theorem}
Let $(V, \tau_{BY})$ be a nuclear space as above, $K\subseteq V'$ and $L$ a linear functional on $S(V)$. $L$ has a $\overline{K}-$representing measure if and only if $L$ is $\tau_{BY}-$continuous and $L(\operatorname{Pos}(K))\subseteq[0, +\infty)$, where
$\operatorname{Pos}(K)$ is defined as in \eqref{PosK} for $A=S(V)$ and $\overline{K}$ denotes the weak closure of $K$ in $V'$.
\end{theorem}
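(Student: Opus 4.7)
The necessity direction is routine. Assume $L$ admits a representing measure $\mu$ on $\overline{K}$. For $a\in\operatorname{Pos}(K)$, the Gelfand transform $\hat a$ restricted to $V'$ (with its weak topology) is a polynomial in the evaluation maps $\alpha\mapsto \alpha(v)$, $v\in V$, and therefore continuous; hence the inequality $\hat a\ge 0$ propagates from $K$ to $\overline{K}$, giving $L(a)=\int_{\overline{K}}\hat a\, d\mu\ge 0$. For the $\tau_{BY}$-continuity of $L$, since $\tau_{BY}$ is the locally convex direct sum of the quotient topologies on each $S(V)_k$, it is enough to check continuity of $L|_{S(V_s)_k}$ for each step $V_s$ of the inductive system; this reduces to continuity of the symmetric $k$-linear form $(f_1,\ldots,f_k)\mapsto \int \alpha(f_1)\cdots\alpha(f_k)\, d\mu(\alpha)$, which follows from the fact that $\mu$, as a Radon measure on $V'$, is inner regular and assigns finite mass to polars of neighbourhoods defining the nuclear topology on each $V_s$.

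For sufficiency I would proceed in two steps. First, via the universal property of $S(V)$, identify $X(S(V))$ with the algebraic dual $V^*$ so that $\tau_{X(S(V))}$ coincides with the weak topology on $V^*$; with $\widetilde{K}$ the closure of $K$ in $V^*$ one has $\overline{K}=\widetilde{K}\cap V'$. Continuity of the Gelfand transforms gives $\operatorname{Pos}(K)=\operatorname{Pos}(\widetilde{K})$, so the positivity hypothesis persists. Using the $\tau_{BY}$-continuity of $L$, one selects a continuous seminorm on some step $V_s$ dominating $L$ on $S(V_s)$, and from it builds an element $p\in S(V)$ (for example $1+v_1^2+\cdots+v_N^2$ for a suitable finite family of vectors) whose Gelfand transform has weakly compact sublevel sets in $\widetilde{K}$ by Banach--Alaoglu. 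This puts us in a position to invoke the generalised Riesz--Haviland theorem (Theorem \ref{GRHThm}), producing a Radon measure $\mu$ on $X(S(V))$ with $\operatorname{supp}\mu\subseteq \widetilde{K}$ that represents $L$.

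The main obstacle is the second step: pushing the support of $\mu$ from $V^*$ down to $V'$, so that $\operatorname{supp}\mu\subseteq \overline{K}$. Here the nuclear structure of $(V,\tau_{BY})$ enters decisively. The $\tau_{BY}$-continuity of $L$ forces the positive quadratic form $v\mapsto L(v^2)=\int \alpha(v)^2\, d\mu(\alpha)$ on $V$ to be continuous with respect to the inductive topology; invoking the nuclearity of $V$, which supplies Hilbert--Schmidt-type factorisations between consecutive steps of the inductive system, and arguing in a Bochner--Minlos style, one concludes that $\mu$-almost every $\alpha\in X(S(V))$ extends continuously to $V$, i.e.\ belongs to $V'$. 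Reconciling the algebraic-dual setting produced by Theorem \ref{GRHThm} with the topological-dual conclusion demanded by the statement is, in my view, the technical heart of the proof and the step that genuinely consumes both the strict inductive limit assumption and the nuclearity of $V$.
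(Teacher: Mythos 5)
The paper does not prove this theorem; it is a restatement of a result of Borchers and Yngvason, for which the paper simply cites \cite{Bor-Yng75}. Judged on its own merits, your sufficiency argument has a genuine gap at the point where you invoke Theorem \ref{GRHThm}. You propose to satisfy its hypothesis with an element $p=1+v_1^2+\cdots+v_N^2\in S(V)$ whose sublevel sets in $\widetilde{K}\subseteq V^*$ would be compact by Banach--Alaoglu. No such $p$ can exist when $V$ is infinite dimensional: every element of $S(V)$ is a finite sum of products of finitely many vectors, so $\hat{p}(\alpha)$ depends only on the restriction of $\alpha$ to a finite-dimensional subspace $W\subseteq V$, and the set $\{\alpha\in V^*:\hat{p}(\alpha)\le i\}$ leaves the coordinates $\alpha(v)$, $v\notin W$, entirely unconstrained; it is therefore unbounded in the product topology and cannot be compact, nor is its intersection with a general closed $K$ (take $K=V'$) compact. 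A polar of a neighbourhood is weak-* compact, but it is not the sublevel set of any single element of $S(V)$, so Banach--Alaoglu does not rescue the construction. Consequently the measure on $V^*$ that your final Minlos-type step is supposed to push down to $V'$ is never produced. The argument of \cite{Bor-Yng75} does not pass through a global Riesz--Haviland theorem on $X(S(V))$: the measure is built as a projective limit over finite-dimensional quotients, applying the classical Riesz--Haviland theorem on each quotient, with nuclearity used (in the Minlos fashion you correctly anticipate) to secure countable additivity and concentration on $V'$.

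There is a second, smaller gap in the necessity direction. You dismiss the $\tau_{BY}$-continuity of $L$ as a consequence of inner regularity and finiteness of $\mu$ on polars; but finiteness of a Radon measure on compact sets gives no control over the moments $\int|\alpha(f_1)\cdots\alpha(f_k)|\,\mu(d\alpha)$, which is what continuity of $L$ on $S(V)_k$ requires, and this implication is itself one of the substantive assertions of the theorem. The standard route is to note that $f\mapsto\int|\hat{f}|\,d\mu$ is, on each $S(V_s)_k$, a supremum of continuous seminorms $f\mapsto\int_C|\hat{f}|\,d\mu$ over compact $C\subseteq\overline{K}$ (each such $C$ being equicontinuous), hence a lower semicontinuous seminorm, and then to use barrelledness of the strict inductive limit to upgrade lower semicontinuity to continuity. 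As written, your first paragraph asserts the conclusion rather than proving it.
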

In this setting, having in mind Question \ref{all-top} and Question \ref{weak-cont-nuclear}, it becomes natural to ask the following.
\begin{q}
Does Proposition \ref{PropC} or the results in Theorem~\ref{nuclear} hold for the class of lc nuclear spaces considered in \cite[Section 4]{Bor-Yng75}?
\end{q}
Note that the integral representation obtained is actually regarded in \cite{Bor-Yng75} as a decomposition
of a continuous $K-$positive functional into multiplicative functionals. This is indeed the more general point of view considered in this paper as well as in many others (see e.g. \cite{BLY}, \cite{Pow71-74}, \cite[Section 12.4]{Schmu90}) where interesting results about decompositions of linear functionals on a commutative (and even on an arbitrary) nuclear *- algebra into pure states were obtained. 

The results described in this section suggest that introducing more structure on the algebra considered in Problem \ref{GenKMP} can allow to get better results in those special cases and so open new directions towards interesting applications (e.g. considering the symmetric algebra $S(V)$ endowed with the topology introduced in Proposition \ref{PropC} allowed to explicit compute the support and so in some sense to get a better result than Proposition \ref{coroll-lmcAlg}). \begin{q}
Investigate Problem \ref{GenKMP} for other special class of algebras (e.g. algebra of polynomials invariant under the action of a group considered in \cite{CKS}, Stone algebras). 
\end{q}

Several questions remain open also in relation to the determinacy of the moment problem on the symmetric algebra of a locally convex space as described in \cite{IKM} (see also \cite{I} for a review about determinacy in both finite and infinite dimensions). Last but not least, one could obviously ask:
\begin{q}
What about the truncated case in this general setting? 
 \end{q}
There is a huge literature about the truncated moment problem in the finite dimensional case (see e.g. \cite{F-Survey}, \cite{Lau08}, \cite[Chap. III]{LasBook} for nice surveys about the results obtained so far) but there are still many unsolved questions and even less is known in the infinite dimensional case. As explained in the introduction, despite the infinite dimensional truncated moment problem is a longstanding problem in several applied fields,  there is a lack of a structural theoretical investigation of this problem. Some progress in this direction have been recently done e.g. in \cite{KLS07}, \cite{KLS09}, \cite{MolLach15}, where a systematic study of the truncated case for point processes and random sets was initiated. However, to the best of our knowledge, at the moment there is no results for the truncated moment problem for general $\RR-$algebras. In \cite{GIKM2} we are currently working together with Mehdi Ghasemi on a version of the achievements contained in Section 2 for the truncated case starting from some promising results and ideas discussed with Murray Marshall. 

\section*{Acknowledgments}
We would like to thank the anonymous referee for her or his helpful comments and suggestions. We also express our gratitude to Sabine Burgdorf, Mehdi Ghasemi and Victor Vinnikov for many interesting discussions.
\newpage
\normalsize
\bibliographystyle{amsalpha}

\end{document}